\begin{document}

%

%

\twocolumn[

\aistatstitle{Distributed Sparse Multicategory Discriminant Analysis}

\aistatsauthor{ Hengchao Chen \And Qiang Sun }

\aistatsaddress{ University of Toronto \And  University of Toronto } ]

\begin{abstract}
  This paper proposes a convex formulation for sparse multicategory linear discriminant analysis  and then extend it to the distributed setting when data are  stored across multiple sites. The key observation  is that for the purpose of classification it suffices to recover the discriminant subspace which is invariant to orthogonal transformations. Theoretically, we establish statistical properties ensuring that the distributed sparse multicategory linear discriminant analysis performs as good as the centralized version after {a few rounds} of communications. Numerical studies lend strong support to our methodology and theory.
\end{abstract}

\section{INTRODUCTION}
Classification aims to assign data points to the correct classes. One popular multicategory classification method is the Fisher's linear discriminant analysis (LDA), which has appealing performances in many applications \citep{Hand06,Michie94}. Similar to the principal component analysis (PCA), a well-known unsupervised dimensionality reduction technique, LDA can also serve as an efficient supervised dimensionality reduction tool. However, when the number of features is larger than the number of observations, which prevails in  modern datasets, LDA performs poorly due to the diverging spectra \citep{Bickel08} and the noise accumulation \citep{Fan08}. In the moderately high dimensional setting, the naive Bayes, which assumes conditional independence between features, is shown to outperform the Fisher's rule \citep{Bickel04}, while in the ultra-high dimensionsal setting, any classifier using all the predictors will be no better than random guessing due to the noise accumulation, even if the true covariance matrix is an identity matrix \citep{Fan08}. Moreover, as pointed out by \citet{Witten11}, the Fisher's LDA classifier is not interpretable when the discriminant vectors  have no particular structure.

In recent years, many high-dimensional extensions of LDA have been proposed. For binary classification problems, the linear programming discriminant \citep{Cai11}, the regularized optimal affine discriminant \citep{Fan12}, and the direct sparse discriminant analysis \citep{Mai12} are three popular sparse linear discriminant analysis (SLDA) methods. All these three methods assume some sparsity assumptions on the discriminant directions. Another SLDA method is the thresholding linear discriminant analysis \citep{Shao11}, which assumes sparsity conditions on the common covariance matrix and the mean difference vector. It is non-trivial to extend these binary SLDA methods to the multicategory case. For multicategory classification problems, \citet{Qiao09} proposed to solve a penalized least squares problem with the lasso penalty, \citet{Clemmensen11} proposed the sparse optimal scoring, and \citet{Witten11} proposed the $\ell_1$ penalized LDA. All of these methods involve solving a non-convex optimization problem and thus can be computationally expensive. Moreover, it is unclear how to extend these methods to a distributed setting when data are possibly stored in multiple sites. Recently, \citet{Mai19} proposed to directly estimate the Bayes rule via a composite-type loss function. \citet{Safo16} proposed to estimate the discriminant vectors by applying a basis approach to the methods studied in \citet{Cai11} and \citet{Shao11}. \citet{Gaynanova16} proposed to extract the sparse discriminant vectors simultaneously through a convex programming problem. The underlying idea of these three methods are the same: for the purpose of classification, it suffices to accurately estimate the discriminant subspace, i.e., the subspace spanned by the discriminant vectors. In this paper, we will exploit this observation and propose an $\ell_{1,1}$ regularized multicategory linear discriminant annalysis method, which can also  estimate the sparse discriminant vectors simultaneously. It is worth mentioning that we choose the $\ell_{1,1}$ penalty instead of the $\ell_{2,1}$ penalty  in \citet{Mai19} and \citet{Gaynanova16}, so that we can easily extend the multicategory LDA method to a distributed setting and establish the corresponding statistical properties.

In addition to challenges posed by the high dimensionality of many datasets,  with the rapid developments of science and technology, we have seen more and more  datasets that are  often scattered across distant servers, possibly due to the limitation of storage resources. The  difficulties of fusing or aggregating these datasets due to the communication cost and privacy concerns have inspired many works in communication-efficient statistical learning \citep{Lee17,Wang17,Fan19,Jordan19}. One popular distributed estimation framework is the one-shot divide-and-conquer algorithm, in which one first computes the local estimators and then obtains the distributed estimator as an average of the local ones \citep{Lee17,Fan19}. These averaging-based approaches suffer from at least three drawbacks: non-diminishing bias when  local estimators are biased,  allowing only a limited number of local machines (much smaller than $\sqrt{N}$, where $N$ is the total sample size), and poor performance in nonlinear problems \citep{Jordan19}. To overcome these issues, \citet{Jordan19} and \citet{Wang17} studied a multi-round framework for distributed learning, which is referred to as the communication-efficient surrogate likelihood (CSL) framework by \citet{Jordan19}. 

Although distributed classification has attracted some attention \citep{Kokiopoulou10,Wang19,Lian18}, few research has been done on the distributed sparse discriminant analysis. For binary classification, \citet{Tian17} proposed a divide-and-conquer sparse LDA method, in which they  averaged the debiased local estimators of the discriminant direction and then sparsified the aggregated estimator. Moreover, they showed that their  distributed sparse LDA method can achieve the same performance as the centralized one which used all samples. However, it is not straightforward to generalize their approach to the multicategory case. To handle the multicategory case, in this paper, we propose an $\ell_{1,1}$ regularized multicategory SLDA formulation and then extend it to the distributed setting via the CSL framework. Theoretically, we show that the distributed multicategory SLDA (dmSLDA) method performs as good as the centralized SLDA after a few rounds of communications. Moreover, we conduct numerical experiments to further support our methodology and theory.

The rest of this paper proceeds as follows. In Section \ref{sec:2}, we review the multicategory sparse discriminant analysis and propose an $\ell_{1,1}$ regularized multicategory SLDA method. We then extend it to the distributed setting and propose the dmSLDA method in Section \ref{sec:3}. Section \ref{sec:4} establishes the $\ell_{2,2}$ and $\ell_{1,1}$ estimation error bounds of dmSLDA under certain conditions and explain how the proposed distributed estimator achieves the same performance as the centralized version. To further back up the methodology and theory, we conduct several numerical experiments in Section \ref{sec:5}. In Section \ref{sec:6}, we conclude this paper with some discussions. 

\noindent{\bf Notation.} We summarize some notations that will be used throughout this paper. By convention, we use regular letters for scalars and bold letters for both vectors and matrices. We employ $[M]$ to represent the set $\{1,\ldots,M\}$ for any positive integer $M$ and $[(a,b)]$ to abbreviate the set $\{(1,1),(1,2),\ldots,(a,b)\}$ for any positive integers $a$ and $b$. For a vector $\bu=(u_1,\ldots,u_d)^\top \in \RR^d$, we denote its $\ell_q$ norm by $\norm{\bu}_q=(\sum_{i=1}^d|u_i|^q)^{1/q}$, its $\ell_0$ norm by $\norm{\bu}_0 = \sum_{i=1}^d 1_{u_i\neq 0}$, and its $\ell_\infty$ norm by $\norm{\bu}_{\infty}=\max_{1\leq i\leq d}|u_i|$.  Moreover, for any two vectors $\bu,\bv\in\RR^d$, we use $\inner{\bu}{\bv}=\bu^\top \bv$ to denote the Euclidean inner product of $\bu$ and $\bv$. For a matrix $\bA\in\RR^{m\times n}$, we define the $\ell_{p,q}$ norm of $\bA$ as $\opnorm{\bA}{p,q}= \norm{\rbr{\norm{\bA_{1\cdot}}_p, \ldots, \norm{\bA_{m\cdot}}_{p}}}_q,$ where $\bA_{i\cdot}$ is the $i$th row of $\bA$. For any two matrices $\bA,\bB\in\RR^{m\times n}$, we denote $\inner{\bA}{\bB}=\textnormal{Tr}(\bA^\top\bB)$. For a matrix $\bA\in\RR^{m\times n}$, we use $\textnormal{span}(\bA)$ to denote the subspace spanned by the columns of $\bA$. For two sequences of real numbers $\cbr{a_n}_{n\geq 1}$ and $\cbr{b_n}_{n\geq 1}$, we write $a_n \lesssim b_n$ if $a_n \leq C b_n$ for some constant $C>0$ independent of $n$.

\section{MULTICATEGORY SLDA}\label{sec:2}
Let $\cbr{(\bx_i,y_i)}_{i=1}^N$ be a collection of $N$ $\textnormal{i.i.d.}$ samples with $y_i\in[K]$ drawn from a discrete distribution $\PP(y_i=k)=\pi_k$ and $\bx_i\in\RR^d$ drawn from a distribution satisfying $\EE[\bx_i|y_i=k]=\bmu_k$ and $\Var(\bx_i|y_i=k)=\bSigma$, where $K$ is the number of classes, $\pi_k$ is the prior probability of class $k$ such that $\pi_k\geq 0$ and $\sum_{k=1}^K\pi_k=1$, and $\bmu_k$ and $\bSigma$ are the class-conditional mean and covariance matrix of $\bx_i$ in class $k$, respectively. The $K$ centroids lie in an affine space of dimension $r_0\leq K-1$, which can be much smaller than $d$ \citep{Hastie09}. To exploit the underlying low dimensional structure, Fisher proposed to solve a set of discriminant vectors $\{\bv_k^*\}_{k=1}^q\subset\RR^d$ by maximizing the following sequence of Rayleigh quotients under orthogonal constraints,
\#
&\bv_k^*=\argmax_{\bv_k\in\RR^d}\frac{\bv_k^{\top}\bB\bv_k}{\bv_k^{\top}\bSigma\bv_k},\label{equ:2.1}\\
&\textnormal{s.t.}\quad \bv_k^{*\top}\bSigma\bv_j^*=0,\quad\forall 1\leq j\leq k\leq q,\notag
\#
where $\bB=\Var\rbr{\EE[\bx|y]}$ is the between-class covariance and $\bSigma=\EE[\Var\rbr{\bx|y}]$ is the within-class covariance. In other words, Fisher's proposal seeks a low dimensional projection of samples that maximizes the between-class variation relative to the within-class variation. Classification can then be performed on the projected data $\bV_q^\top\bx\in\RR^q$, where $\bV_q=(\bv_1^*,\ldots,\bv_q^*)\in\RR^{d\times q}$. In the classical setting where $N\gg d$, the unknown between-class covariance $\bB$ and within-class covariance $\bSigma$ in \eqref{equ:2.1} are substituted in practice by their corresponding sample versions $\hat\bB$ and $\hat\bSigma$ respectively, where
\#
\hat\bB & = \sum_{k=1}^{K}\hat\pi_k(\hat\bmu_k - \hat{\bar\bmu})(\hat\bmu_k - \hat{\bar\bmu})^{\top},\notag\\
\hat \bSigma & = \frac{1}{N-K}\sum_{k=1}^K\sum_{y_i=k}(\bx_i-\hat\bmu_k)(\bx_i-\hat\bmu_k)^{\top},\label{equ:2.2}
\#
in which $\hat\bmu_k=\frac{1}{b_k}\sum_{y_i=k}\bx_i$ is the sample mean of  class $k$, $b_k=\sum_{i=1}^N 1_{y_i=k}$ is the sample size of class $k$, $\hat\pi_k=b_k/N$ is the sample proportion of class $k$, and  $\hat{\bar\bmu}=\sum_{k=1}^{K}\hat\pi_k\hat\bmu_k=\frac{1}{N}\sum_{i}\bx_i$ is the overall sample mean.

In high dimensions when $d>N$, the Fisher's approach \eqref{equ:2.1} suffers from the singularity of $\hat\bSigma$, the noise accumulation, and the lack of interpretability \citep{Bickel04,Fan08,Witten11}. To overcome these issues, \citet{Witten11} and \citet{Clemmensen11} proposed sparse discriminant analysis, which requires solving a sequence of non-convex optimization problems and thus can be computationally intractable. On the other hand, \citet{Mai19}, \citet{Safo16} and \citet{Gaynanova16} proposed to compute any set of basis vectors of the discriminant subspace, i.e., $\cV_q\coloneqq \textnormal{span}(\bV_q)$ and then sparsified these basis vectors. Though not directly reported,  the core of these ideas is the following proposition.

\begin{proposition}\label{proposition:1}
	\textit{Suppose $\bV,\bW\in\RR^{d\times q}$ span the same subspace, then the LDA classification rule based on $\bV^\top\bx$ is equivalent to the LDA classification rule based on $\bW^\top\bx$.}
\end{proposition}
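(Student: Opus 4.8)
The plan is to reduce the claim to the invariance of the Fisher LDA rule under invertible linear reparametrizations of the projected coordinates. First I would exploit the hypothesis $\textnormal{span}(\bV)=\textnormal{span}(\bW)$: since both matrices have $q$ columns spanning the same $q$-dimensional subspace, each column of $\bW$ is a linear combination of the columns of $\bV$, so $\bW=\bV\bA$ for some $\bA\in\RR^{q\times q}$, and $\bA$ must be invertible because $\bW$ spans the full subspace. Consequently $\bW^\top\bx=\bA^\top\bV^\top\bx$, i.e.\ projecting through $\bW$ yields exactly an invertible linear transformation of the features obtained by projecting through $\bV$.

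Next I would write down the LDA discriminant scores in the projected coordinates. Projecting the model through $\bV$ turns the class means into $\bV^\top\bmu_k$ and the common covariance into $\bV^\top\bSigma\bV$, so the rule based on $\bV^\top\bx$ assigns $\bx$ to $\argmax_{k\in[K]}\delta_k^{\bV}(\bx)$, where
\[
\delta_k^{\bV}(\bx)=(\bV^\top\bmu_k)^\top(\bV^\top\bSigma\bV)^{-1}(\bV^\top\bx)-\tfrac12(\bV^\top\bmu_k)^\top(\bV^\top\bSigma\bV)^{-1}(\bV^\top\bmu_k)+\log\pi_k,
\]
and similarly for $\bW$ with every $\bV$ replaced by $\bW$. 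The goal is to show $\delta_k^{\bW}(\bx)=\delta_k^{\bV}(\bx)$ for every $k$ and every $\bx$, since then the two rules return the same $\argmax$ and hence the same label.

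The crux is a direct algebraic cancellation. Substituting $\bW=\bV\bA$ gives $\bW^\top\bSigma\bW=\bA^\top(\bV^\top\bSigma\bV)\bA$, hence $(\bW^\top\bSigma\bW)^{-1}=\bA^{-1}(\bV^\top\bSigma\bV)^{-1}\bA^{-\top}$; likewise $\bW^\top\bmu_k=\bA^\top\bV^\top\bmu_k$ and $\bW^\top\bx=\bA^\top\bV^\top\bx$. Plugging these into $\delta_k^{\bW}$, the factors $\bA$ and $\bA^{-1}$ (and $\bA^\top$ and $\bA^{-\top}$) cancel in both the linear and the quadratic terms, leaving exactly $\delta_k^{\bV}(\bx)$. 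This establishes the pointwise identity of the discriminant functions and therefore the equivalence of the classification rules.

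The main thing to watch is well-posedness and the rank hypothesis: the argument implicitly requires $\bV^\top\bSigma\bV$ (equivalently $\bW^\top\bSigma\bW$) to be invertible, which holds once $\bV$ and $\bW$ have full column rank $q$ and $\bSigma$ is positive definite on the relevant subspace — this is precisely what makes $\bA$ invertible and the change of basis legitimate. I do not expect any genuine difficulty beyond this bookkeeping; the essential content is that it is the discriminant subspace, rather than any particular basis chosen to represent it, that determines the LDA rule.
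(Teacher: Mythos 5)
Your argument is correct and is essentially the paper's own proof made self-contained: the paper likewise extracts an invertible change-of-basis matrix ($\bV=\bW\bR$) from the equal-span hypothesis and then invokes Proposition 4 of \citet{Gaynanova16} (noting orthogonality of $\bR$ can be relaxed to invertibility), which is precisely the discriminant-score cancellation you carry out explicitly. Your closing remark on well-posedness (full column rank of $\bV,\bW$ and invertibility of $\bV^\top\bSigma\bV$) is the right caveat and is implicit in the paper's setting.
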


Proposition \ref{proposition:1} is a generalization of Proposition 4 in \citet{Gaynanova16}, implying that for the purpose of classification, it suffices to find any set of basis vectors of the discriminant subspace. 

\begin{proposition}\label{proposition:2}
	\textit{Assume that the within-class covariance $\bSigma$ is nonsingular and define $\bW^*\in\RR^{d\times(K-1)}$ as the solution to the following optimization problem
		\#
		\bW^*=\argmin_{\bW\in\RR^{d\times(K-1)}}\cbr{\frac{1}{2}\inner{\bW}{\bSigma \bW} - \inner{\bW}{\bU}},\label{equ:2.3}
		\#
		where $\bU=(\bmu_1-\bar\bmu,\ldots,\bmu_{K-1}-\bar\bmu)\in\RR^{d\times (K-1)}$ and $\bar\bmu = \sum_{k=1}^K \pi_k \bmu_k$. Then we have $\cW\coloneqq\textnormal{span}(\bW^*)=\cV_{\textnormal{rank}(\bB)}$, where $\bB$ is the between-class covariance.}
\end{proposition}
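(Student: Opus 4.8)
The plan is to reduce \eqref{equ:2.3} to a closed form, identify $\cW$ with the range of $\bSigma^{-1}\bB$, and then match that range to the discriminant subspace through a symmetrization argument. First I would solve \eqref{equ:2.3} explicitly. Being a within-class covariance, $\bSigma$ is positive semidefinite, and since it is nonsingular it is in fact positive definite; hence the objective $\frac{1}{2}\inner{\bW}{\bSigma\bW} - \inner{\bW}{\bU}$ is strictly convex in $\bW$ and has a unique minimizer. Setting the gradient $\bSigma\bW - \bU$ to zero yields $\bW^* = \bSigma^{-1}\bU$, so that $\cW = \textnormal{span}(\bSigma^{-1}\bU) = \bSigma^{-1}\textnormal{span}(\bU)$, the last equality using that $\bSigma^{-1}$ is a bijection.

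Next I would relate $\textnormal{span}(\bU)$ to the range of $\bB$. Writing $\bD = (\bmu_1 - \bar\bmu, \ldots, \bmu_K - \bar\bmu)$ and $\bP = \textnormal{diag}(\pi_1, \ldots, \pi_K)$, one has $\bB = \bD\bP\bD^\top$. Because $\sum_{k=1}^K \pi_k(\bmu_k - \bar\bmu) = 0$ by the definition of $\bar\bmu$, the last column of $\bD$ is a linear combination of the first $K-1$, so $\textnormal{span}(\bU) = \textnormal{span}(\bD)$; and since $\bP$ is positive definite (assuming every $\pi_k > 0$), the range of $\bD\bP\bD^\top$ equals $\textnormal{span}(\bD)$. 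Combining these gives $\textnormal{span}(\bU) = \textnormal{span}(\bB)$, whence $\cW = \bSigma^{-1}\textnormal{span}(\bB)$.

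Finally I would identify $\cV_{\textnormal{rank}(\bB)}$ with $\bSigma^{-1}\textnormal{span}(\bB)$. The Rayleigh-quotient problem \eqref{equ:2.1} is the generalized eigenproblem for the pencil $(\bB, \bSigma)$, which I would symmetrize via the positive definite square root: $\bv$ solves $\bB\bv = \lambda\bSigma\bv$ if and only if $\bu = \bSigma^{1/2}\bv$ is an eigenvector of the symmetric positive semidefinite matrix $\tilde\bB = \bSigma^{-1/2}\bB\bSigma^{-1/2}$ with the same eigenvalue. Since $\bSigma^{-1/2}$ is invertible, $\textnormal{rank}(\tilde\bB) = \textnormal{rank}(\bB)$, so $\tilde\bB$ has exactly $\textnormal{rank}(\bB)$ nonzero eigenvalues; running the sequential maximization in \eqref{equ:2.1} for $\textnormal{rank}(\bB)$ steps exhausts precisely the eigenvectors of $\tilde\bB$ with nonzero eigenvalue, whose span is $\textnormal{range}(\tilde\bB)$. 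Mapping back, $\cV_{\textnormal{rank}(\bB)} = \bSigma^{-1/2}\textnormal{range}(\tilde\bB) = \bSigma^{-1/2}\bigl(\bSigma^{-1/2}\textnormal{span}(\bB)\bigr) = \bSigma^{-1}\textnormal{span}(\bB) = \cW$, which is the claim.

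The main obstacle I anticipate is this last step: one must argue carefully that the subspace generated by the sequential, $\bSigma$-orthogonally constrained Rayleigh-quotient maximization coincides with the eigenspace of the nonzero eigenvalues, and that this identification is well defined even when some eigenvalues are repeated. The symmetrization $\tilde\bB = \bSigma^{-1/2}\bB\bSigma^{-1/2}$ is what makes this transparent, since for a symmetric matrix the span of the top $\textnormal{rank}(\bB)$ eigenvectors is unambiguously $\textnormal{range}(\tilde\bB)$ and the $\bSigma$-orthogonality of the $\bv_k^*$ becomes ordinary orthogonality of the corresponding $\bu_k$.
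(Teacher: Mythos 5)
Your proposal is correct and follows essentially the same route as the paper's proof: solve \eqref{equ:2.3} in closed form as $\bW^*=\bSigma^{-1}\bU$, whiten the constrained Rayleigh-quotient problem via $\bSigma^{1/2}$ into a symmetric eigenproblem, and identify $\cV_{\textnormal{rank}(\bB)}$ with $\bSigma^{-1}$ applied to the span of the between-class structure. The only cosmetic difference is that you factor $\bB$ through all $K$ centered means with the diagonal weight matrix $\textnormal{diag}(\pi_1,\ldots,\pi_K)$ and deduce $\textnormal{span}(\bU)=\textnormal{span}(\bB)$ directly, whereas the paper writes $\bB=\bU\bP\bU^\top$ for an explicit positive definite $(K-1)\times(K-1)$ matrix $\bP$; your explicit handling of repeated eigenvalues in the sequential maximization is, if anything, slightly more careful than the paper's.
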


Proposition \ref{proposition:2} gives an explicit choice of the basis vectors of the discriminant subspace, simpler than that in \citet{Mai19} or \citet{Gaynanova16}. Since $\bSigma$ and $\bU$ in \eqref{equ:2.3} are unknown in practice, we will substitute them by their corresponding sample versions $\hat\bSigma$ given by \eqref{equ:2.2} and $\hat\bU=(\hat\bmu_1-\hat{\bar\bmu},\ldots,\hat\bmu_{K-1}-\hat{\bar\bmu})$, respectively. In high dimensions, we assume element-wise sparsity on $\bW^*$ and propose the following $\ell_{1,1}$ regularized estimator
\#
\hat\bW=\argmin_{\bW\in\RR^{d\times (K-1)}}\Big\{&\frac{1}{2}\langle\bW,\hat\bSigma\bW\rangle-\langle\bW,\hat\bU\rangle\notag\\
& +\lambda\opnorm{\bW}{1,1}\Big\},\label{equ:2.4}
\#
where $\lambda>0$ is a tuning parameter. We choose the $\ell_{1,1}$ penalty (or the element-wise sparsity) rather than the $\ell_{2,1}$ penalty (or the row sparsity) used in \citet{Mai19} and \citet{Gaynanova16} because only with the $\ell_{1,1}$ penalty shall we extend the multicategory SLDA method to the distributed setting easily. Moreover, the assumption of element-wise sparsity is a weaker assumption than the row sparsity. Finally, to perform  classification, one can apply LDA to the projected data $\hat\bW^\top\bx\in\RR^{K-1}$.

\begin{remark}
	\textit{The optimization problem \eqref{equ:2.4} and similarly \eqref{equ:3.1} and \eqref{equ:3.2} in the next section can be solved efficiently using the fast iterative shrinkage-thresholding algorithm (FISTA) proposed by \citet{Beck09}.}
\end{remark}

\section{DISTRIBUTED MULTICATEGORY SLDA}\label{sec:3}
In this section, we extend the proposed $\ell_{1,1}$ regularized multicategory SLDA method to the distributed setting. Suppose there are total of $M$ machines and on machine $m\in[M]$, $n$ samples\footnote{Here we assume for simplicity that the sample sizes across different machines are equal. In general, our framework can allow different sample sizes on each machine as long as the sample sizes are approximately balanced; that is the proportion of any single class can not vanish to zero.} 
$\cbr{(\bx^m_i,y^m_i)}_{i=1}^{n}$ are drawn independently from the distribution specified in Section \ref{sec:2}. Moreover, samples on different machines are assumed to be independent. To recover the discriminant subspace, the most straightforward yet unrealistic approach is to aggregate data from all machines together and then apply the $\ell_{1,1}$ regularized multicategory SLDA method based on the full sample. Another approach is to gather the local statistics $\hat\bSigma^m$ and $\hat\bU^m$ from all machines and then estimate $\bW^*$ using  \eqref{equ:2.4} with the aggregated statistics $\hat\bSigma=\frac{1}{M}\sum_{m=1}^M\hat\bSigma^m$ and $\hat\bU=\frac{1}{M}\sum_{m=1}^M\hat\bU^m$. Although these two centralized methods usually give the best estimation and prediction performance, they are communication expensive, with communication cost $\cO(ndM)$ and $\cO(d^2M)$, respectively. This motivates us to design a communication-efficient algorithm that achieves the same performance as the centralized SLDA.

We apply the communication-efficient surrogate likelihood framework \citep{Jordan19} to the multicategory SLDA method \eqref{equ:2.4} and  obtain the distributed multicategory SLDA method (dmSLDA). The full algorithm is collected in Algorithm \ref{algorithm:1}.
\begin{algorithm}[htbp]\label{algorithm:1}	
	\caption{Distributed multicategory SLDA (dmSLDA).}
	\LinesNumbered
	\KwIn{$\cbr{(\bx_i^m,y_i^m)}_{i=1}^{n_m}$, $T$}
	\KwOut{$\hat \bW$}
	compute $\hat\bSigma^m$ and $\hat \bU^m$ for all $m$\;	
	
	compute $\hat \bW_0$ as the solution to \eqref{equ:3.1}\;
	
	\For{$t$ \textnormal{in} $1:T$}{
		send $\hat \bW_{t-1}$ to local machines\;
		
		compute $\nabla\cL_m(\hat\bW_{t-1})$ on local machines\;
		
		send $\nabla\cL_m(\hat\bW_{t-1})$ back to the master machine\;
		
		compute $\hat\bW_{t}$ as the solution to \eqref{equ:3.2}\;
	}
	return $\hat\bW=\hat\bW_{t^*}$ with $t^*=\argmin_{0\leq t\leq T}\hat e_t$\;
\end{algorithm}
Specifically,  we first compute the local estimators $\hat\bSigma^m$ and $\hat\bU^m$. Then we compute the initial estimator $\hat\bW_0$ on the master machine, which is set to be the first local machine without loss of generality, as the solution to the following $\ell_{1,1}$ regularized optimization problem,
\#\label{equ:3.1}
\hat \bW_0 = \argmin_{\bW\in\RR^{d\times (K-1)}}\cbr{\cL_1(\bW)+\lambda_0\opnorm{\bW}{1,1}},
\#
where $\lambda_0>0$ is a tuning parameter and $\cL_m(\bW)$ is the loss function on machine $m$ given by
\$
\cL_m(\bW)=\frac{1}{2}\langle\bW,\hat\bSigma^m\bW\rangle-\langle\bW,\hat\bU^m\rangle.
\$ 
At the $t$th iteration, we send the $(t-1)$th estimator $\hat\bW_{t-1}$ to local machines, compute the gradients $\nabla\cL_m(\hat\bW_{t-1})$ of the $m$th loss function $\cL_m(\cdot)$ at $\hat\bW_{t-1}$, and then send these gradients back to the master machine. Then on the master machine, we compute the $t$th estimator by minimizing the following shifted $\ell_{1,1}$ regularized objective function,
\#
&\hat \bW_{t} = \argmin_{\bW\in\RR^{d\times (K-1)}}\Big\{\cL_1(\bW)+\lambda_{t}\opnorm{\bW}{1,1}\notag\\
&+\langle\frac{1}{M}\sum_{m=1}^M\nabla\cL_m(\hat\bW_{t-1})-\nabla\cL_1(\hat\bW_{t-1}),\bW\rangle\Big\},\label{equ:3.2}
\#
where $\lambda_t>0$ is a tuning parameter. After repeating the procedure for $T$ times, the dmSLDA algorithm returns $\hat \bW=\hat\bW_{t^*}$ with $t^*=\argmin_{0\leq t\leq T}\hat e_t$, where $\hat e_t=\sum_{m=2}^M\cL_m(\hat \bW_t)$. In other words, we choose the best estimator in $\{\hat\bW_t\}_{t=0}^T$ in the sense that it minimizes the validation loss. Finally, one can perform classification based on the projected data $\hat\bW^{\top}\bx\in\RR^{K-1}$. 

\begin{remark}
	\textit{The communication cost of the dmSLDA method is $\cO(dKM)$ since it only communicates the estimators $\hat\bW_{t}$ and the gradients $\nabla\cL_m(\hat\bW_{t-1})$. This is significantly smaller than that of the centralized methods, i.e., $\cO(ndM)$ (communicate all raw data $\{(\bx_{i}^m,y_i^m)\}$) or $\cO(d^2M)$ (communicate local  estimators $\hat\bSigma^m$ and $\hat\bU^m$), when $K\ll d,n$.}
\end{remark}

\begin{remark}
	\textit{In numerical experiments, we propose to choose $\lambda_t$ from {a candidate set $\Lambda_t$} by minimizing the validation loss $\hat e(\hat\bW_t)\coloneqq\sum_{m=2}^M\cL_m(\hat\bW_t)$, i.e., the loss on all other machines.}
\end{remark}

\begin{remark}
	\textit{Since in general a few iterations suffice for the procedure to match the same accuracy of the centralized estimator \citep{Jordan19}, we  choose $T=3$ or $5$ in practice.}
\end{remark}

\section{STATISTICAL ANALYSIS}\label{sec:4}
In this section, we establish statistical properties ensuring that dmSLDA can achieve the performance of the centralized SLDA after a few rounds of communication. We prove this in two steps: first, we will establish a recursive estimation error bound, i.e., we upper bound the estimation error $\opnorm{\hat\bW_{t+1}-\bW^*}{}$ in terms of the error in the previous round $\opnorm{\hat \bW_t-\bW^*}{}$; second, by applying this recursive estimation error bound iteratively, we can obtain an estimation error bound indicating that dmSLDA matches the performance of the centralized SLDA after a few rounds. 

To begin with, we make a few assumptions. For the sake of simplicity, we consider a balanced setting defined below\footnote{If the balance assumption does not hold and some class has significantly small prior probability and subsample size, one may refer to literature related for imbalanced data, such as \citet{Krawczyk16} and the references therein.}. In the balanced setting, the prior probability and the subsample size of each class are equal across different classes and machines. Our framework does allow approximately balanced settings when the sample size proportions of any single class and any single machine do not vanish. 

\begin{assumption}[Balanced]\label{assumption:1}
	\textit{We assume all classes have equal prior probability, i.e., $\pi_k=\frac{1}{K}$. Moreover, the subsample size of class $k$ on machine $m$ is assumed to be constant, denoted by $b$. Thus, the subsample size on a single machine is $n=Kb$ and the full sample size is $N=Mn$.} 
\end{assumption}

Furthermore, we assume that the conditional distribution of $\bx$ given $y=k$ is sub-Gaussian with mean $\bmu_k$ and covariance $\bSigma$.

\begin{assumption}[sub-Gaussian]\label{assumption:2}
	\textit{We assume that given $y=k$, the transformed variable $\bz^k=\bSigma^{-1/2}(\bx-\bmu_k)$ follows a sub-Gaussian distribution with mean $\zero$ and covariance $\bI_d$. In particular, there exists some $\sigma^2>0$ such that the following inequality holds,
		\$
		\EE[e^{\lambda \langle\bv,\bz^k\rangle}\mid y=k]\leq e^{\frac{\lambda^2\sigma^2}{2}}, \forall \lambda\in\RR, \bv\in\RR^{d},\norm{\bv}_2=1.
		\$}
\end{assumption}

Moreover, we assume a specific restricted eigenvalue (RE) condition on the population covariance matrix $\bSigma$, which is a common assumption in high-dimensional statistics \citep{Cai11,Lee17,Tian17,Wang17}. First, given a subset $\cA\subset[d]$, we define the  cone $\cC(\cA,3)\subset\RR^d$ as
\$
\cC(\cA,3)=\cbr{\bzeta\in\RR^d\mid\norm{\bzeta_{\cA^c}}_1\leq 3\norm{\bzeta_{\cA}}_1},
\$
where $\bzeta_{\cA}\in\RR^d$ is  $\bzeta$ with nonzeros restricted on the set $\cA$, i.e., $\bzeta_{\cA,i}=\bzeta_i$, if $i\in\cA$, $\bzeta_{\cA,i}=0$ otherwise; and $\bzeta_{\cA^{c}}$ is analogously defined. Then we assume $\bSigma$ satisfies the $RE(\kappa_0,s)$ condition for some constant $\kappa_0>0$ and positive integer $s$, which is defined below.

\begin{assumption}[Restricted Eigenvalue]\label{assumption:3}
	\textit{We assume that the covariance matrix $\bSigma$ satisfies the RE($\kappa_0,s$) condition, i.e., there exists some constant $\kappa_0>0$ such that
		\$
		\bzeta^{\top}\bSigma\bzeta\geq\kappa_0\norm{\bzeta}_2^2,\  \forall\bzeta\in \cC(\cA,3),\ |\cA|\leq s.
		\$}
\end{assumption}

Assumption \ref{assumption:2} and \ref{assumption:3} allow us to establish concentration inequalities for $\hat\bSigma$ and $\hat\bU$ as well as an RE condition on the sample covariance $\hat\bSigma^1$, which are stated in Lemmas \ref{lma:a1} and \ref{lma:a4}. We are ready to present our main theoretical results. We need to define the shifted loss function $\tilde\cL_1(\bW,\hat\bW_t)$ as follows,
\#
\tilde \cL_1(\bW, \hat \bW_t)=&\cL_1(\bW)+\langle\frac{1}{M}\sum_{m=1}^M\nabla\cL_m(\hat \bW_t)\notag\\
&-\nabla\cL_1(\hat \bW_t),\bW\rangle.\label{equ:4.1}
\#
We establish in Lemma \ref{lma:1} an upper bound on the $\ell_{\infty,\infty}$ norm  $\opnorm{\nabla\tilde\cL_1(\bW^*,\hat\bW_t)}{\infty,\infty}$ of the gradient of the shifted loss function at $\bW^*$, which suggests an appropriate choice of the tuning parameter $\lambda_{t+1}$.

\begin{lemma}\label{lma:1}
	\textit{Suppose Assumptions \ref{assumption:1} and \ref{assumption:2} hold and $\sup_{j}\bSigma_{jj}\leq c_0$ for some constant $c_0$, then there exist some universal contants $c_1$ and $c_2$ such that
		\$
		\opnorm{\nabla\tilde \cL_1(\bW^*, \hat \bW_t)}{\infty,\infty} \leq a_n+b_n\opnorm{\hat \bW_t-\bW^*}{1,1},
		\$
		holds for all $t$ with probability at least $1-2\delta$, where
		\#
		a_n&=c_1\sqrt{\frac{\log(dM/\delta)}{N}}\opnorm{\bW^*}{1,1}\notag\\
		&\quad+c_2\sqrt{\frac{K\log(dMK/\delta)}{N}},\label{equ:4.2}\\
		b_n&=2c_1\sqrt{\frac{\log(dM/\delta)}{n}}.\label{equ:4.3}
		\#}
\end{lemma}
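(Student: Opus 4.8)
The plan is to first expand the gradient of the shifted loss function at $\bW^*$ and decompose it into a statistical noise term and a term that depends on the previous-round error $\hat\bW_t-\bW^*$. Using the definition in \eqref{equ:4.1}, the gradient of $\tilde\cL_1(\cdot,\hat\bW_t)$ at $\bW^*$ is $\nabla\cL_1(\bW^*)+\frac{1}{M}\sum_{m=1}^M\nabla\cL_m(\hat\bW_t)-\nabla\cL_1(\hat\bW_t)$. Since $\nabla\cL_m(\bW)=\hat\bSigma^m\bW-\hat\bU^m$, each gradient is affine in $\bW$, so I can write $\nabla\cL_m(\hat\bW_t)-\nabla\cL_m(\bW^*)=\hat\bSigma^m(\hat\bW_t-\bW^*)$. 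Substituting, the gradient at $\bW^*$ becomes $\frac{1}{M}\sum_m\nabla\cL_m(\bW^*)+\big(\hat\bSigma^1-\frac{1}{M}\sum_m\hat\bSigma^m\big)(\hat\bW_t-\bW^*)$. This is the crucial simplification: the first term is the full-sample gradient at $\bW^*$, which is noise not depending on $\hat\bW_t$, and will be controlled by $a_n$; the second term carries the dependence on the previous error and will be controlled by $b_n\,\opnorm{\hat\bW_t-\bW^*}{1,1}$.

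Next I would bound each piece in $\ell_{\infty,\infty}$ norm. For the noise term $\frac{1}{M}\sum_m\nabla\cL_m(\bW^*)=\hat\bSigma\bW^*-\hat\bU$ (with the aggregated statistics $\hat\bSigma,\hat\bU$ based on the full sample $N$), I would add and subtract the population quantity $\bSigma\bW^*-\bU$, which is exactly the gradient of the population objective \eqref{equ:2.3} at its minimizer and hence equals $\zero$. Thus the noise term equals $(\hat\bSigma-\bSigma)\bW^*-(\hat\bU-\bU)$, and I would bound its $\ell_{\infty,\infty}$ norm by $\opnorm{\hat\bSigma-\bSigma}{\infty,\infty}\opnorm{\bW^*}{1,1}+\opnorm{\hat\bU-\bU}{\infty,\infty}$. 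The concentration inequalities from Lemma \ref{lma:a1} (which use Assumptions \ref{assumption:1} and \ref{assumption:2} together with $\sup_j\bSigma_{jj}\le c_0$) give $\opnorm{\hat\bSigma-\bSigma}{\infty,\infty}\lesssim\sqrt{\log(dM/\delta)/N}$ and $\opnorm{\hat\bU-\bU}{\infty,\infty}\lesssim\sqrt{K\log(dMK/\delta)/N}$, which reproduce the two terms in \eqref{equ:4.2}. For the error-dependent term, I would bound $\opnorm{(\hat\bSigma^1-\hat\bSigma)(\hat\bW_t-\bW^*)}{\infty,\infty}\le\opnorm{\hat\bSigma^1-\hat\bSigma}{\infty,\infty}\opnorm{\hat\bW_t-\bW^*}{1,1}$ using the standard inequality $\opnorm{\bA\bB}{\infty,\infty}\le\opnorm{\bA}{\infty,\infty}\opnorm{\bB}{1,1}$ (applied row-wise, since the $(i,j)$ entry of $\bA\bB$ is an inner product of a row of $\bA$ with a column of $\bB$, bounded by $\max$-norm times $\ell_1$-norm). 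Then $\opnorm{\hat\bSigma^1-\hat\bSigma}{\infty,\infty}\le\opnorm{\hat\bSigma^1-\bSigma}{\infty,\infty}+\opnorm{\hat\bSigma-\bSigma}{\infty,\infty}$; the first is a single-machine deviation of order $\sqrt{\log(dM/\delta)/n}$ and the second is lower order $\sqrt{\log(dM/\delta)/N}$, so the sum is dominated by $\sqrt{\log(dM/\delta)/n}$, matching $b_n$ in \eqref{equ:4.3} with the factor $2c_1$ absorbing both contributions.

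The final bookkeeping step is to make the bound hold \emph{uniformly over all $t$} with probability at least $1-2\delta$, rather than for a single fixed $t$. Here I note that the randomness enters only through the fixed (data-dependent but not $t$-dependent) matrices $\hat\bSigma-\bSigma$, $\hat\bU-\bU$, and $\hat\bSigma^1-\bSigma$; the bound's dependence on $t$ is entirely through the deterministic quantity $\opnorm{\hat\bW_t-\bW^*}{1,1}$ appearing explicitly on the right-hand side. Consequently a single high-probability event—the intersection of the two concentration events from Lemma \ref{lma:a1}, each holding with probability at least $1-\delta$—suffices simultaneously for every $t$, and a union bound over these two events gives the stated $1-2\delta$. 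The main obstacle is not any single estimate but the clean affine decomposition in the first paragraph: recognizing that differencing the affine gradients collapses the cross-machine correction into precisely $(\hat\bSigma^1-\hat\bSigma)(\hat\bW_t-\bW^*)$ is what separates the pure-noise term (scaling like $1/\sqrt N$) from the contraction term (scaling like $1/\sqrt n$), and this separation is exactly what later drives the geometric decay of the error across communication rounds.
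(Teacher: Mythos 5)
Your proof is correct and follows essentially the same route as the paper's: the same affine decomposition of $\nabla\tilde\cL_1(\bW^*,\hat\bW_t)$ into the full-sample gradient $\frac{1}{M}\sum_{m=1}^M\nabla\cL_m(\bW^*)$ (bounded via the concentration inequalities of Lemmas \ref{lma:a1} and \ref{lma:a2}, exactly as you sketch, including the identity $\bSigma\bW^*=\bU$) plus a term of the form $(\hat\bSigma^1-\hat\bSigma)(\hat\bW_t-\bW^*)$ controlled by $\opnorm{\bA\bB}{\infty,\infty}\leq\opnorm{\bA}{\infty,\infty}\opnorm{\bB}{1,1}$, together with the same observation that a single concentration event yields the bound uniformly in $t$ since the $t$-dependence appears only through $\opnorm{\hat\bW_t-\bW^*}{1,1}$ on the right-hand side. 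The only cosmetic differences are that the paper inserts $\bSigma$ into both covariance factors and bounds $\opnorm{\hat\bSigma^1-\bSigma}{\infty,\infty}+\frac{1}{M}\sum_{m=1}^M\opnorm{\hat\bSigma^m-\bSigma}{\infty,\infty}$ where you bound $\opnorm{\hat\bSigma^1-\bSigma}{\infty,\infty}+\opnorm{\hat\bSigma-\bSigma}{\infty,\infty}$ (marginally tighter but yielding the same $b_n$), and your word ``deterministic'' for $\opnorm{\hat\bW_t-\bW^*}{1,1}$ should read ``random but appearing explicitly in the bound,'' which does not affect the argument.
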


The $\ell_{1,1}$ regularization parameter $\lambda_{t+1}$ is set as twice the upper bound in Lemma \ref{lma:1}, i.e.,
\#\label{equ:lambda} 
\lambda_{t+1}=2a_n+2b_n\opnorm{\hat \bW_t-\bW^*}{1,1}.
\#
Although the right hand side of \eqref{equ:lambda} involves the oracle matrix $\bW^*$ and thus unknown, it however suggests how to tune the parameter in practice. For example, the parameter $\lambda_{t+1}$ should decrease as the estimation error $\opnorm{\hat \bW_t-\bW^*}{1,1}$ goes down during iterations. Next, with this specific choice of parameter $\lambda_{t+1}$, we establish in Theorem \ref{theorem:2} a recursive estimation error bound connecting the estimation error $\opnorm{\hat\bW_{t+1}-\bW^*}{}$ to that of the previous iteration $\opnorm{\hat\bW_t-\bW^*}{}$, where the norm is the $\ell_{1,1}$ or $\ell_{2,2}$ norm.

\begin{theorem}\label{theorem:2}
	\textit{Suppose that Assumptions \ref{assumption:1}, \ref{assumption:2}, and \ref{assumption:3} hold, $\sup_j\bSigma_{jj}\leq c_0$ for some constant $c_0$, and the regularization parameter $\lambda_{t+1}$ is set as in \eqref{equ:lambda}, then there exist some universal constants $c_1,c_2,c_3$ such that for $n\geq c_3s\log(d/\delta)$, the recursive estimation error bounds
		\$
		\opnorm{\hat \bW_{t+1}-\bW^*}{2,2} & \leq \frac{48a_n\sqrt{s}}{\kappa_0} \\
		&+ \frac{48b_n\sqrt{s}}{\kappa_0}\opnorm{\hat\bW_t-\bW^*}{1,1},\\
		\opnorm{\hat \bW_{t+1}-\bW^*}{1,1} & \leq \frac{192a_ns}{\kappa_0} \\
		&+ \frac{192b_ns}{\kappa_0}\opnorm{\hat\bW_t-\bW^*}{1,1},
		\$
		hold for all $t$ with probability at least $1-3\delta$, where $a_n$ and $b_n$ are given by \eqref{equ:4.2} and \eqref{equ:4.3},  respectively, and $s=|\cS|$ is the cardinality of the support $\cS$ of $\bW^*$.}
\end{theorem}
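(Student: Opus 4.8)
The plan is to run the standard restricted-eigenvalue analysis for $\ell_1$-penalized M-estimators, but carried out at the level of matrices with entrywise ($\ell_{1,1}$) sparsity. Throughout I write $\mathbf{D}=\hat\bW_{t+1}-\bW^*$ and condition on the intersection of the event from Lemma~\ref{lma:1} (which holds \emph{uniformly in $t$} with probability $1-2\delta$) with the sample restricted-eigenvalue event of Lemma~\ref{lma:a4} (which holds with probability $1-\delta$ and does not depend on $t$); a union bound then accounts for the $1-3\delta$ in the statement. The starting point is the basic inequality from the optimality of $\hat\bW_{t+1}$ in \eqref{equ:3.2}: since $\hat\bW_{t+1}$ minimizes $\tilde\cL_1(\cdot,\hat\bW_t)+\lambda_{t+1}\opnorm{\cdot}{1,1}$ of \eqref{equ:4.1}, comparing its value at $\hat\bW_{t+1}$ and at $\bW^*$ gives
\[
\tilde\cL_1(\hat\bW_{t+1},\hat\bW_t)+\lambda_{t+1}\opnorm{\hat\bW_{t+1}}{1,1}\leq\tilde\cL_1(\bW^*,\hat\bW_t)+\lambda_{t+1}\opnorm{\bW^*}{1,1}.
\]
Because $\tilde\cL_1(\cdot,\hat\bW_t)$ is quadratic with Hessian $\hat\bSigma^1$, its increment equals exactly $\inner{\nabla\tilde\cL_1(\bW^*,\hat\bW_t)}{\mathbf{D}}+\tfrac12\inner{\mathbf{D}}{\hat\bSigma^1\mathbf{D}}$, so the display rearranges into an upper bound on $\tfrac12\inner{\mathbf{D}}{\hat\bSigma^1\mathbf{D}}$.

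Next I would control the two resulting terms. For the linear term, matrix H\"older duality gives $|\inner{\nabla\tilde\cL_1(\bW^*,\hat\bW_t)}{\mathbf{D}}|\leq\opnorm{\nabla\tilde\cL_1(\bW^*,\hat\bW_t)}{\infty,\infty}\,\opnorm{\mathbf{D}}{1,1}$, and Lemma~\ref{lma:1} together with the choice \eqref{equ:lambda} of $\lambda_{t+1}$ forces $\opnorm{\nabla\tilde\cL_1(\bW^*,\hat\bW_t)}{\infty,\infty}\leq\lambda_{t+1}/2$. For the penalty term, the key structural fact is that $\opnorm{\cdot}{1,1}$ is the sum of absolute entries and hence decomposes entrywise over the support $\cS$ of $\bW^*$; the triangle inequality then yields $\opnorm{\bW^*}{1,1}-\opnorm{\hat\bW_{t+1}}{1,1}\leq\opnorm{\mathbf{D}_\cS}{1,1}-\opnorm{\mathbf{D}_{\cS^c}}{1,1}$, where $\mathbf{D}_\cS$ keeps only the entries indexed by $\cS$. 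Substituting both estimates produces
\[
\tfrac12\inner{\mathbf{D}}{\hat\bSigma^1\mathbf{D}}\leq\tfrac{3\lambda_{t+1}}{2}\opnorm{\mathbf{D}_\cS}{1,1}-\tfrac{\lambda_{t+1}}{2}\opnorm{\mathbf{D}_{\cS^c}}{1,1}.
\]
Since $\hat\bSigma^1$ is positive semidefinite the left side is nonnegative, which immediately certifies the cone membership $\opnorm{\mathbf{D}_{\cS^c}}{1,1}\leq3\opnorm{\mathbf{D}_\cS}{1,1}$ and simultaneously leaves the clean bound $\tfrac12\inner{\mathbf{D}}{\hat\bSigma^1\mathbf{D}}\leq\tfrac{3\lambda_{t+1}}{2}\opnorm{\mathbf{D}_\cS}{1,1}$.

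With the cone condition in hand, I would invoke the sample restricted-eigenvalue bound of Lemma~\ref{lma:a4}, valid once $n\geq c_3 s\log(d/\delta)$, in the form $\inner{\mathbf{D}}{\hat\bSigma^1\mathbf{D}}\gtrsim\kappa_0\opnorm{\mathbf{D}}{2,2}^2$ for every $\mathbf{D}$ in this matrix cone. Combined with the Cauchy--Schwarz estimate $\opnorm{\mathbf{D}_\cS}{1,1}\leq\sqrt{s}\,\opnorm{\mathbf{D}_\cS}{2,2}\leq\sqrt{s}\,\opnorm{\mathbf{D}}{2,2}$ (using $|\cS|=s$), the previous display gives $\opnorm{\mathbf{D}}{2,2}\lesssim\lambda_{t+1}\sqrt{s}/\kappa_0$; plugging in $\lambda_{t+1}=2a_n+2b_n\opnorm{\hat\bW_t-\bW^*}{1,1}$ from \eqref{equ:lambda} and tracking constants yields the first ($\ell_{2,2}$) recursion. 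The second ($\ell_{1,1}$) recursion then follows from the cone condition, which gives $\opnorm{\mathbf{D}}{1,1}\leq4\opnorm{\mathbf{D}_\cS}{1,1}\leq4\sqrt{s}\,\opnorm{\mathbf{D}}{2,2}$, and this extra factor $\sqrt{s}$ explains the relation $192=4\times48$ between the two coefficients.

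The main obstacle is the restricted-eigenvalue step, i.e., passing from the \emph{population, vectorwise} condition $RE(\kappa_0,s)$ of Assumption~\ref{assumption:3} to a \emph{sample, matrixwise} lower bound $\inner{\mathbf{D}}{\hat\bSigma^1\mathbf{D}}\gtrsim\kappa_0\opnorm{\mathbf{D}}{2,2}^2$ over the entrywise-sparse cone. The delicacy is that $\inner{\mathbf{D}}{\hat\bSigma^1\mathbf{D}}=\sum_{j}\mathbf{D}_{\cdot j}^\top\hat\bSigma^1\mathbf{D}_{\cdot j}$ decouples across columns, yet the cone constraint only controls the \emph{aggregate} off-support mass and need not place each individual column in its own cone, so the vector RE cannot simply be applied column by column. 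This is precisely the content Lemma~\ref{lma:a4} is designed to supply (through a sparse-eigenvalue/covering argument that also produces the $n\gtrsim s\log(d/\delta)$ requirement), and I would treat it as a black box here. A secondary point demanding care is uniformity: the recursion must hold simultaneously for all $t$, which is why I rely on the ``for all $t$'' version of Lemma~\ref{lma:1} and on an RE event that is independent of the iteration index.
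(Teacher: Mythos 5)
Your route is essentially the paper's: basic inequality from the optimality of $\hat\bW_{t+1}$ in \eqref{equ:3.2}, H\"older with $\opnorm{\nabla\tilde\cL_1(\bW^*,\hat\bW_t)}{\infty,\infty}\leq\lambda_{t+1}/2$ via Lemma \ref{lma:1} and \eqref{equ:lambda}, decomposability of $\opnorm{\cdot}{1,1}$ over $\cS$, cone membership, then the sample RE bound; your probability bookkeeping ($1-2\delta$ uniformly in $t$ from Lemma \ref{lma:1}, plus $1-\delta$ for the $t$-independent RE event) also matches. But there is one genuine gap, at exactly the point you flagged and then waved away. Your matrix-level basic inequality certifies only the \emph{aggregate} cone membership $\hat\bW_{t+1}-\bW^*\in\tilde\cC(\cS,3)$, i.e., $\opnorm{\bDelta_{\cS^c}}{1,1}\leq 3\opnorm{\bDelta_{\cS}}{1,1}$, whereas Lemma \ref{lma:a4} is stated (and proved) only for $\bDelta\in\cH(s)$, the set of matrices \emph{each of whose columns} lies in a vector cone $\cC(\cA,3)$ with $|\cA|\leq s$. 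As you yourself observe, the aggregate cone does not place each column in its own cone---one column can carry all its mass off the support, compensated by another column---so $\tilde\cC(\cS,3)\not\subset\cH(s)$, and "treating Lemma \ref{lma:a4} as a black box" does not close the argument: the black box's hypothesis is precisely the columnwise cone condition you never establish. An RE bound valid on the aggregate cone alone would require a different statement (e.g.\ a Rudelson--Zhou-type bound with an $\ell_1$ tolerance, $\bzeta^\top\hat\bSigma^1\bzeta\geq\alpha\norm{\bzeta}_2^2-\tau\norm{\bzeta}_1^2$ for all $\bzeta$, summed over columns), which is not what the paper proves.

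The paper closes this hole with Step 1 of Lemma \ref{lma:a3}, and the missing idea is cheap: both $\tilde\cL_1(\cdot,\hat\bW_t)$ and $\opnorm{\cdot}{1,1}$ decompose columnwise, so \eqref{equ:3.2} splits into $K-1$ independent $\ell_1$-penalized vector problems sharing the same $\lambda_{t+1}$; since $\norm{\nabla\tilde\ell_{1k}(\bw^*_k,\hat\bw^t_k)}_\infty\leq\opnorm{\nabla\tilde\cL_1(\bW^*,\hat\bW_t)}{\infty,\infty}\leq\lambda_{t+1}/2$, repeating your basic-inequality computation \emph{per column} yields $\hat\bw^{t+1}_k-\bw^*_k\in\cC(\cS_k,3)$ with $\cS_k=\supp(\bw^*_k)$ and $|\cS_k|\leq|\cS|=s$, hence $\hat\bW_{t+1}-\bW^*\in\cH(s)$ and Lemma \ref{lma:a4} legitimately applies. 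With that patch inserted before the RE step, the rest of your derivation goes through verbatim; indeed your bound $\frac{3\lambda_{t+1}}{2}\opnorm{\bDelta_\cS}{1,1}\leq\frac{3\lambda_{t+1}}{2}\sqrt{s}\opnorm{\bDelta}{2,2}$ is slightly sharper than the paper's $\frac{3\lambda_{t+1}}{2}\opnorm{\bDelta}{1,1}\leq 6\sqrt{s}\lambda_{t+1}\opnorm{\bDelta}{2,2}$, so the stated constants $48$ and $192=4\times 48$ are still implied after substituting \eqref{equ:lambda}.
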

\begin{figure*}[h]
	\centering
	\includegraphics[width = \textwidth,height = 0.35\textheight]{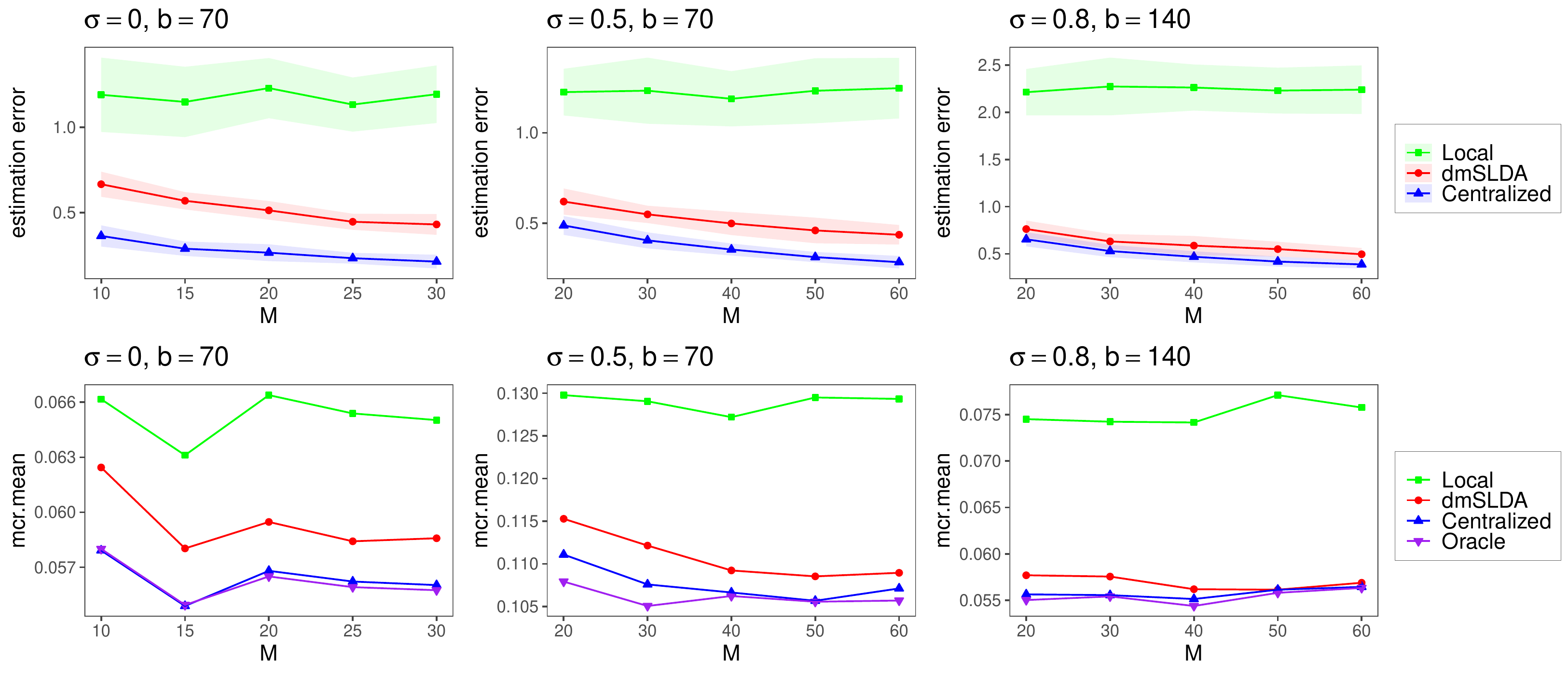}
	\caption{The top plots show the mean and the standard deviation of $\ell_{2,2}$ estimation error for dmSLDA, local SLDA, and centralized SLDA over 40 repetitions for different $\sigma,b$ and $M$. The bottom plots show the averaged MCR on the testing data for all four methods over 40 repetitions.}
	\label{fig:1}
\end{figure*}

Theorem \ref{theorem:2} upper bounds the $(t+1)$th estimation error $\opnorm{\hat\bW_{t+1}-\bW^*}{1,1}$ by a linear function of the $t$th estimation error $\opnorm{\hat\bW_{t}-\bW^*}{1,1}$. Thus by applying Theorem \ref{theorem:2} iteratively, we can bound $\opnorm{\hat\bW_{t+1}-\bW^*}{}$ in terms of the size of the local  estimation error $\opnorm{\hat\bW_0-\bW^*}{}$.

\begin{theorem}\label{theorem:3}
	\textit{Assume that the conditions in Theorem \ref{theorem:2} hold and the regularization parameter $\lambda_{t+1}$ is set as in \eqref{equ:lambda}, then there exist some universal constants $c_1,c_2,c_3$ such that for $n\geq c_3s\log(d/\delta)$, the estimation error bounds
		\$
		\opnorm{\hat \bW_{t+1}-\bW^*}{1,1}&\leq \frac{\frak a(1-\frak b^{t+1})}{1-\frak b}\\&+\frak b^{t+1}\opnorm{\hat \bW_{0}-\bW^*}{1,1},\\
		\opnorm{\hat \bW_{t+1}-\bW^*}{2,2}&\leq\frac{\frak a(1-\frak b^{t+1})}{4\sqrt{s}(1-\frak b)}\\&+\frac{\frak b^{t+1}}{4\sqrt{s}}\opnorm{\hat \bW_{0}-\bW^*}{1,1},
		\$
		hold with probablity at least $1-3\delta$, where $\mathfrak a=\frac{192a_ns}{\kappa_0}$ and  $\mathfrak b=\frac{192b_ns}{\kappa_0}$.}
\end{theorem}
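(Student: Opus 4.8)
The plan is to derive Theorem \ref{theorem:3} directly from the recursive bounds of Theorem \ref{theorem:2} by unrolling a scalar linear recursion; no fresh probabilistic input is required. Writing $e_t=\opnorm{\hat\bW_t-\bW^*}{1,1}$, the $\ell_{1,1}$ inequality in Theorem \ref{theorem:2} reads exactly $e_{t+1}\leq \mathfrak{a}+\mathfrak{b}\,e_t$ with $\mathfrak{a}=192a_ns/\kappa_0$ and $\mathfrak{b}=192b_ns/\kappa_0$. Since Theorem \ref{theorem:2} guarantees these bounds hold for all $t$ simultaneously on a single event of probability at least $1-3\delta$, I may iterate the recursion on that same event, which is precisely why the conclusion of Theorem \ref{theorem:3} inherits the identical probability guarantee rather than accumulating a failure probability at each round.

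First I would unroll the recursion by induction on $t$. A single substitution gives $e_{t+1}\leq \mathfrak{a}+\mathfrak{b}(\mathfrak{a}+\mathfrak{b}\,e_{t-1})$, and repeating down to $e_0$ yields $e_{t+1}\leq \mathfrak{a}\sum_{j=0}^{t}\mathfrak{b}^j+\mathfrak{b}^{t+1}e_0$. Evaluating the finite geometric series $\sum_{j=0}^{t}\mathfrak{b}^j=(1-\mathfrak{b}^{t+1})/(1-\mathfrak{b})$, valid whenever $\mathfrak{b}\neq 1$ and representing a genuine contraction when $\mathfrak{b}<1$, as holds for suitable universal constants under the sample-size condition $n\geq c_3 s\log(d/\delta)$, immediately produces the first displayed bound on $\opnorm{\hat\bW_{t+1}-\bW^*}{1,1}$.

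For the $\ell_{2,2}$ bound I would exploit the fact that the two recursions in Theorem \ref{theorem:2} share the same coefficients up to the factor $1/(4\sqrt{s})$: indeed $48a_n\sqrt{s}/\kappa_0=\mathfrak{a}/(4\sqrt{s})$ and $48b_n\sqrt{s}/\kappa_0=\mathfrak{b}/(4\sqrt{s})$, so Theorem \ref{theorem:2} gives $\opnorm{\hat\bW_{t+1}-\bW^*}{2,2}\leq (4\sqrt{s})^{-1}(\mathfrak{a}+\mathfrak{b}\,e_t)$. Substituting the already-unrolled $\ell_{1,1}$ bound $e_t\leq \mathfrak{a}(1-\mathfrak{b}^{t})/(1-\mathfrak{b})+\mathfrak{b}^t e_0$ and simplifying the combination $\mathfrak{a}+\mathfrak{b}(\mathfrak{a}(1-\mathfrak{b}^{t})/(1-\mathfrak{b})+\mathfrak{b}^t e_0)=\mathfrak{a}(1-\mathfrak{b}^{t+1})/(1-\mathfrak{b})+\mathfrak{b}^{t+1}e_0$, where the first two terms collapse via $(1-\mathfrak{b})+\mathfrak{b}(1-\mathfrak{b}^t)=1-\mathfrak{b}^{t+1}$, yields the second displayed bound after pulling out the common factor $1/(4\sqrt{s})$.

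There is no serious obstacle here: all the analytic and probabilistic difficulty has already been absorbed into Lemma \ref{lma:1} and Theorem \ref{theorem:2}, and what remains is bookkeeping. The only points that require genuine care are (i) ensuring the recursion is applied on the common high-probability event rather than re-incurring a failure probability at each iteration, and (ii) tracking the telescoping of the geometric sum so that the exponent advances cleanly from $t$ to $t+1$ when passing from the bound on $e_t$ to that on $e_{t+1}$; both are routine once the coefficient identifications above are in place.
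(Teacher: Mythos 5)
Your proposal is correct and follows essentially the same route as the paper: unroll the $\ell_{1,1}$ recursion from Theorem \ref{theorem:2} into the geometric sum $\frac{\mathfrak a(1-\mathfrak b^{t+1})}{1-\mathfrak b}+\mathfrak b^{t+1}\opnorm{\hat\bW_0-\bW^*}{1,1}$, then feed that bound into the $\ell_{2,2}$ recursion using the coefficient identities $\frac{48a_n\sqrt{s}}{\kappa_0}=\frac{\mathfrak a}{4\sqrt{s}}$ and $\frac{48b_n\sqrt{s}}{\kappa_0}=\frac{\mathfrak b}{4\sqrt{s}}$, all on the single probability-$(1-3\delta)$ event on which Theorem \ref{theorem:2} holds for every $t$. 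One trivial caveat: $\mathfrak b<1$ is not actually guaranteed by $n\geq c_3 s\log(d/\delta)$ (it needs $n\gtrsim s^2\log(dM/\delta)/\kappa_0^2$), but this is immaterial since the unrolled bound only uses the algebraic identity $\sum_{j=0}^{t}\mathfrak b^{j}=\frac{1-\mathfrak b^{t+1}}{1-\mathfrak b}$, valid for any $\mathfrak b\neq 1$.
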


Theorem \ref{theorem:3} provides theoretical guarantees that dmSLDA performs as well as the centralized SLDA in terms of the estimation accuracy. To see that, we observe that the coefficient $\mathfrak b<1$ when the local sample size $n$ is sufficiently large and thus the terms involving $\mathfrak b^{t+1}$ are negligible after a few (logarithmic) iterations. In other words, the estimation error for sufficiently large $t$ satisfies 
\$
\opnorm{\hat\bW_{t+1}-\bW^*}{2,2} \lesssim \sqrt{\frac{s\log(dM)}{N}},
\$
which is the same as that of the centralized method.

\section{NUMERICAL STUDIES}\label{sec:5}

In this section, we conduct numerical studies to futher back up our methodology and theory. The R code is released \href{https://github.com/HengchaoChen/dmSLDA.git}{here}.

\subsection{dmSLDA}
In this subsection, we conduct numerical simulations to illustrate the theoretical results in Section \ref{sec:4}. Four methods are implemented and compared: the dmSLDA, the local SLDA ($\hat\bW_0$), the centralized SLDA ($\hat\bW$ based on $\hat\bSigma=\frac{1}{M}\sum_{m=1}^M\hat\bSigma^m$ and $\hat\bU=\frac{1}{M}\sum_{m=1}^M\hat\bU^m$), and  the oracle ($\bW^*$). For the dmSLDA, we set $T=3$ as the maximum rounds of communication. In the experiments, we fix the number of classes $K=3$, the dimension $d=400$, and the class-conditional means $\bmu_1=(-2,-2,-2,0,\ldots,0)^\top$, $\bmu_2=(0,0,0,2,2,2,0,\ldots,0)^\top$, and $\bmu_3=(0,\ldots,0)^\top$. The class-conditional covariance $\bSigma$ is set as $\bSigma_{ij}=\sigma^{|i-j|}$ for all $1\leq i\leq j\leq d$, where $\sigma\in\{0,0.5,0.8\}$ gives three different settings and $0^0=1$ by convention. We set the subsample size $b=70$ of each class on one machine when $\sigma\in\{0,0.5\}$ and set $b=140$ when $\sigma=0.8$.\footnote{When $\sigma$ increases, $\kappa_0$ decreases and thus we need a larger subsample size.} Let the number of machines $M$ vary from $20$ to $60$ by an increment of $10$. For each setting, i.e., $(\sigma,b)\in\{(0,70),(0.5,70),(0.8,140)\}$ and $M\in\{20,30,\ldots,60\}$, we repeat the following procedure 40 times. First, we generate the training data $\{\bx^m_{kj}\}_{j=1}^{b}$ of class $k$ on machine $m$ and the testing data $\{\bx_{kj}\}_{j=1}^{300}$ of class $k$ from $\cN(\bmu_k,\bSigma)$ independently. Once the data are generated, we estimate the transformation matrix $\bW^*$ based on the training data and then compute both the $\ell_{2,2}$ estimation error $\opnorm{\hat\bW-\bW^*}{2,2}$ and the misclassification rate (MCR) based on the testing data. The results are displayed in Figure \ref{fig:1}.

It can be seen from Figure \ref{fig:1} that in all settings, the $\ell_{2,2}$ estimation error of the dmSLDA is close to that of the centralized SLDA, which is much smaller than that of the local SLDA, as implied by Theorem \ref{theorem:3}. Moreover, the averaged MCR of the centralized SLDA is almost equal to that of the oracle SLDA, to which the averaged MCR of the dmSLDA is comparable. They all outperform the local SLDA in terms of the averaged MCR. The rationale behind this phenomenon is that a more accurate estimator of $\bW^*$ leads to a better linear discriminant classifier.

\subsection{Binary Classification}

In this subsection, we compare the dmSLDA method with other distributed classifiers in the literature. To the best of our knowledge, dmSLDA is the first distributed multicategorical classifier for high dimensional data. Thus, we restrict our experiments to the binary setting. Specifically, we compare dmSLDA with the Centralized SLDA (Cen.SLDA) and two other distributed binary classifiers, the divide-and-conquer SLDA (dSLDA) method proposed by \citet{Tian17} and the divide-and-conquer $\ell_{1}$ SVM (dSVM) method proposed by \citet{Lian18}. We adopt the following binary balanced setting: $K=2$, $d=200$, $b=200$, $\bSigma_{ij}=0.8^{|i-j|}$, $\bmu_1=(\textnormal{rep}(0.5,10),0,\ldots)$, $\bmu_2=-\bmu_1$ and $M\in\{5,10,15,20,30\}$. For each setting, we generate the training data $\{\bx_{kj}^m\}_{j=1}^b$ of class $k$ on machine $m$ and the testing data $\{\bx_{kj}\}_{j=1}^{300}$ of class $k$ from $\cN(\bmu_k,\bSigma)$ independently and then implement all four methods. The whole procedure is repeated 40 times and the averaged misclassification rates (MCR) are reported in Figure \ref{fig:2}. It turns out that dmSLDA, dSLDA and Cen.SLDA are comparable and all these three methods outperform dSVM.
\begin{figure}[h]
	\centering
	\includegraphics[width = 0.4\textwidth,height = 0.175\textheight]{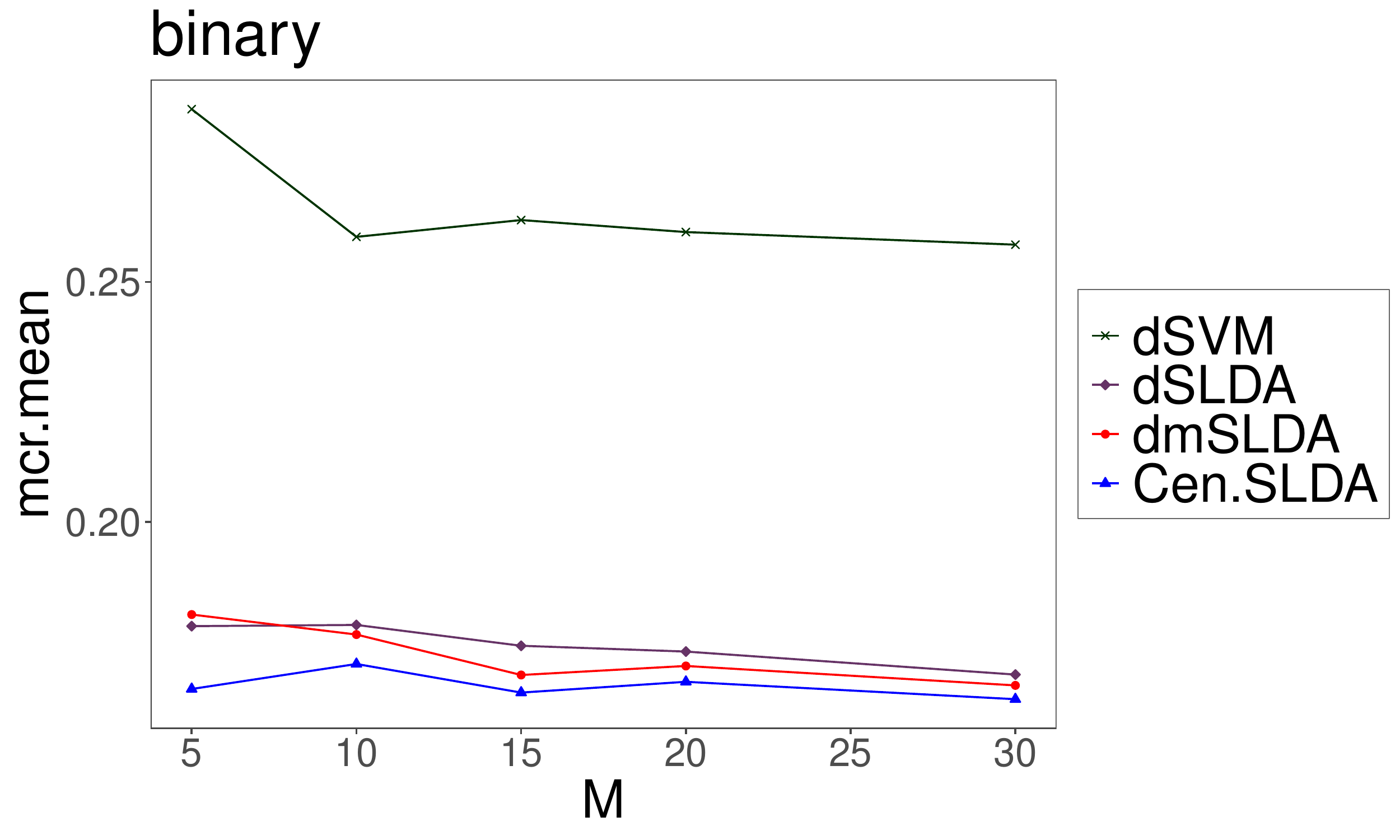}
	\caption{Comparsion between dSVM, dSLDA, dmSLDA, and Cen.SLDA. The averaged misclassification rate is computed over 40 repetitions.}
	\label{fig:2}
\end{figure}
\section{DISCUSSION}\label{sec:6}
In this paper, we propose an $\ell_{1,1}$ regularized multicategory SLDA method and extend it to the distributed setting when data are stored across multiple sites. We establish statistical properties ensuring that the distributed sparse multicategory LDA performs as good as the centralized sparse LDA after a few rounds of communications. Here we assume the data points follow sub-Gaussian distributions. It is possible to extend the current framework to deal with heavy-tailed data or data with adversarial contamination. We leave this to future work. 

\bibliography{references}


\clearpage
\appendix

\thispagestyle{empty}

\onecolumn \makesupplementtitle
\section{PROOF OF MAIN RESULTS}
\subsection{Proof of Proposition \ref{proposition:1}}
\begin{proof}[Proof of Proposition \ref{proposition:1}]
	Since $\bV,\bW\in\RR^{d\times q}$ span the same subspace, there exists an invertible matrix $\bR\in\RR^{q\times q}$ such that $\bV=\bW\bR$. The remaining proof follows the same argument of Proposition 4 in \citet{Gaynanova16}, while we note that the generalization of the orthogonal matrix $\bR$ to the general invertible matrix $\bR$ does not affect the proof.
\end{proof}
\subsection{Proof of Proposition \ref{proposition:2}}
\begin{proof}[Proof of Proposition \ref{proposition:2}]
	Since $\bSigma$ is invertible, the optimization problem \eqref{equ:2.3} has a closed form solution $\bW^*=\bSigma^{-1}\bU$. The core of the proof lies in the decomposition of the between-class covariance matrix $\bB=\bU\bP\bU^\top$ for some symmetric positive definite matrix $\bP\in\RR^{(K-1)\times (K-1)}$. By definition,
	\$
	\bB&=\sum_{k=1}^K\pi_k(\bmu_k-\bar \bmu)(\bmu_k-\bar\bmu)^\top\\
	&=\sum_{k=1}^{K-1}\pi_k(\bmu_k-\bar\bmu)(\bmu_k-\bar\bmu)^\top+\frac{1}{\pi_K}(\sum_{k=1}^{K-1}\pi_k(\bmu_k-\bar\bmu))(\sum_{k=1}^{K-1}\pi_k(\bmu_k-\bar\bmu))^\top,
	\$
	where the second equality holds since $\pi_K(\bmu_K-\bar\bmu)=-\sum_{k=1}^{K-1}\pi_k(\bmu_k-\bar\bmu))$. Then we can write $\bB=\bU\bP\bU^\top$, with $\bP\in\RR^{(K-1)\times (K-1)}$ given by
	\$
	\bP&=\textnormal{diag}(\pi_1,\ldots,\pi_{K-1})+\frac{1}{\pi_K}(\pi_1,\ldots,\pi_{K-1})^\top(\pi_1,\ldots,\pi_{K-1}).
	\$
	Note that $\bP$ is a symmetric positive definite matrix. Then we study the relationship between $\bW^*$ and discriminant vectors $\{\bv_k^*\}_{k=1}^q$ solved by Fisher's reduced rank LDA problem \eqref{equ:2.1}, where $q=\textnormal{rank}(\bB)$. Let $\bA=\bSigma^{-1/2}\bU\bP\bU^\top\bSigma^{-1/2}$, then the Fisher's reduced rank LDA problem can be transformed into a generalized eigenvalue problem
	\#
	&\tilde\bv_k^*=\argmax_{\tilde\bv_k\in\RR^d}\frac{\tilde\bv_k^{\top}\bA\tilde\bv_k}{\tilde\bv_k^{\top}\tilde\bv_k},\label{equ:a1}\\
	&\textnormal{s.t.}\quad \tilde\bv_k^{\top}\tilde\bv_j=0,\quad \forall 1\leq j\leq k\leq q,\notag
	\#
	where $\tilde\bv_k^*=\bSigma^{1/2}\bv_k^*$. Since $\textnormal{rank}(\bA)=\textnormal{rank}(\bB)=q$, the matrix $\bA$ can be orthogonally diagonalized as
	\$
	\bA=\bQ\textnormal{diag}(\lambda_1,\ldots,\lambda_{q},0,\ldots,0)\bQ^{\top},
	\$
	where $\bQ=(\ba_1,\ldots,\ba_d)\in\RR^{d\times d}$ is an orthogonal matrix and $\lambda_1\geq\cdots\geq\lambda_q>0$. Then the solution to the problem \eqref{equ:a1} is $(\tilde\bv^*_1,\ldots,\tilde\bv^*_{q})=(\ba_1,\ldots,\ba_{q})$ and thus the solution to the Fisher's reduced rank problem \eqref{equ:2.1} is $\bV_q=(\bv^*_1,\ldots,\bv^*_{q})=\bSigma^{-1/2}(\ba_1,\ldots,\ba_{q})$. As a result, we can express the linear subspace $\cV_q\coloneqq\textnormal{span}(\bV_q)$ as
	\$
	\cV_{q}&=\textnormal{span}(\bSigma^{-1/2}(\ba_1,\ldots,\ba_{q}))\\
	&\overset{(\rm{i})}{=}\textnormal{span}(\bSigma^{-1/2}(\sqrt{\lambda_1}\ba_1,\ldots,\sqrt{\lambda_{q}}\ba_{q}))\\
	&\overset{(\rm{ii})}{=}\textnormal{span}(\bSigma^{-1/2}\bA)\\
	&\overset{(\rm{iii})}{=}\textnormal{span}(\bSigma^{-1/2}(\bSigma^{-1/2}\bU\bP^{1/2}))\\
	&\overset{(\rm{iv})}{=}\textnormal{span}(\bSigma^{-1}\bU)=\textnormal{span}(\bW^*),
	\$
	where $(\rm{i})$ holds since $\lambda_1\geq\cdots\geq\lambda_q>0$, $(\rm{ii})$ holds since $\textnormal{span}(\sqrt{\lambda_1}\ba_1,\ldots,\sqrt{\lambda_q}\ba_q)=\textnormal{span}(\bA)$, $(\rm{iii})$ holds since $\textnormal{span}(\bA)=\textnormal{span}(\bSigma^{-1/2}\bU\bP^{1/2})$, and $(\rm{iv})$ holds since $\bP^{1/2}$ is invertible.
\end{proof}

\subsection{Proof of Lemma \ref{lma:1}}
\begin{proof}[Proof of Lemma \ref{lma:1}]\label{proof:1}
	Taking derivative of the shifted loss function $\tilde\cL_1(\bW,\hat\bW_t)$ with respect to the first entity at $\bW^*$, we get
	\#
	\nabla\tilde\cL_1(\bW^*,\hat\bW_t)
	&=\nabla\cL_1(\bW^*)+\frac{1}{M}\sum_{m=1}^M\nabla\cL_m(\hat\bW_t)-\nabla\cL_1(\hat\bW_t)\notag\\
	&=\frac{1}{M}\sum_{m=1}^M\nabla\cL_m(\bW^*)+\nabla\cL_1(\bW^*)-\nabla\cL_1(\hat\bW_1)+\frac{1}{M}\sum_{m=1}^M(\nabla\cL_m(\hat\bW_t)-\nabla\cL_m(\bW^*)),\notag
	\#
	where the second equality is obtained by adding and subtracting a term $\frac{1}{M}\nabla\cL_m(\bW^*)$. By definition of $\cL_m(\cdot)$, we have
	\#
	\nabla\tilde\cL_1(\bW^*,\hat\bW_t)=\frac{1}{M}\sum_{m=1}^M\nabla\cL_m(\bW^*)+(\hat\bSigma^1-\bSigma)(\bW^*-\hat\bW_t)+\frac{1}{M}\sum_{m=1}^M(\hat\bSigma^m-\bSigma)(\hat\bW_t-\bW^*).\notag
	\#
	Thus, by the triangle inequality, we have
	\#
	\opnorm{\nabla\tilde\cL_1(\bW^*,\hat\bW_t)}{\infty,\infty}\leq&\opnorm{\frac{1}{M}\sum_{m=1}^M\nabla\cL_m(\bW^*)}{\infty,\infty}\notag\\
	&+\left(\opnorm{\hat\bSigma^1-\bSigma}{\infty,\infty}+\frac{1}{M}\sum_{m=1}^M\opnorm{\hat\bSigma^m-\bSigma}{\infty,\infty}\right)\opnorm{\hat\bW_t-\bW^*}{1,1}.\label{equ:a2}
	\#
	Then we proceed to finish the proof by establishing upper bounds on the $\ell_{\infty,\infty}$ norm of $\frac{1}{M}\sum_{m=1}^M\nabla\cL_m(\bW^*)$ and $\hat\bSigma^m-\bSigma$. Fisrt, we establish in Lemma \ref{lma:a1} upper bounds on the  $\ell_{\infty,\infty}$ norm of the error matrix $\hat\bSigma^m-\bSigma$. Then in Lemma \ref{lma:a2}, we give upper bounds on the $\ell_{\infty,\infty}$ norm of the gradient term $\frac{1}{M}\sum_{m=1}^M\nabla\cL_m(\bW^*)$. 
	
	\begin{lemma}\label{lma:a1}
		\textit{Suppose the assumptions in Lemma \ref{lma:1} hold, then there exist some universal constants $c_1$ and $c_2$ such that the following bounds
			\#
			\opnorm{\hat\bSigma^m-\bSigma}{\infty,\infty} & \leq c_1\sqrt{\frac{\log(dM/\delta)}{n}},\quad \forall m\in[M],\label{conclusion:a1}\\
			\opnorm{\frac{1}{M}\sum_{m=1}^M\hat\bSigma^m-\bSigma}{\infty,\infty} & \leq c_1\sqrt{\frac{\log(dM/\delta)}{N}},\label{conclusion:a2}\\
			\opnorm{\hat\bU^m-\bU}{\infty,\infty} & \leq c_2 \sqrt{\frac{K\log(dMK/\delta)}{n}},\quad \forall m\in[M],\label{conclusion:a3}\\
			\opnorm{\frac{1}{M}\sum_{m=1}^M\hat\bU^m-\bU}{\infty,\infty}& \leq c_2\sqrt{\frac{K\log(dMK/\delta)}{N}},\label{conclusion:a4}
			\#
			hold with probability at least $1-2\delta$.}
	\end{lemma}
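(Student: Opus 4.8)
The plan is to prove all four bounds by reducing each $\ell_{\infty,\infty}$ norm to a maximum over scalar entries and applying a sharp scalar concentration inequality followed by a union bound. Throughout I would work with the transformed variables $\bz^k = \bSigma^{-1/2}(\bx-\bmu_k)$ from Assumption \ref{assumption:2}, so that every coordinate of $\bx-\bmu_{y}$ is sub-Gaussian with variance proxy controlled by $\sigma^2$ and $\sup_j\bSigma_{jj}\leq c_0$.

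For the covariance bounds \eqref{conclusion:a1} and \eqref{conclusion:a2}, I would first remove the sample-mean centering via the standard identity
\[
\sum_{y_i=k}(\bx_i-\hat\bmu_k)(\bx_i-\hat\bmu_k)^\top=\sum_{y_i=k}(\bx_i-\bmu_k)(\bx_i-\bmu_k)^\top-b\,(\hat\bmu_k-\bmu_k)(\hat\bmu_k-\bmu_k)^\top,
\]
which splits $\hat\bSigma^m-\bSigma$ into a leading term built from the true-mean-centered products and a correction term quadratic in $\hat\bmu_k-\bmu_k$. For a fixed entry $(j,l)$, the leading term is an average of the products $(\bx_i-\bmu_{y_i})_j(\bx_i-\bmu_{y_i})_l$; since each factor is sub-Gaussian, the product is sub-exponential, so Bernstein's inequality gives concentration of this entry around $\bSigma_{jl}$ at rate $\sqrt{\log(1/\delta')/n}$. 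A union bound over the $d^2$ entries and the $M$ machines turns $\delta'$ into $\delta/(d^2M)$ and yields \eqref{conclusion:a1}, while the correction term is of order $\norm{\hat\bmu_k-\bmu_k}_\infty^2=O(\log(\cdot)/b)$ entrywise and is therefore higher order and absorbable into $c_1$. Bound \eqref{conclusion:a2} is identical except that $\frac{1}{M}\sum_m\hat\bSigma^m$ is the pooled estimator over all $N=Mn$ samples, so the same Bernstein argument runs with $N$ in place of $n$.

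For the mean bounds \eqref{conclusion:a3} and \eqref{conclusion:a4}, I would write each column of $\hat\bU^m-\bU$ as $(\hat\bmu_k-\bmu_k)-(\hat{\bar\bmu}-\bar\bmu)$ and treat the two pieces separately. Each coordinate of $\hat\bmu_k-\bmu_k$ is an average of $b$ independent sub-Gaussian variables, hence sub-Gaussian with variance proxy of order $1/b$; Hoeffding-type concentration gives a per-entry deviation of order $\sqrt{\log(1/\delta')/b}$, and in the balanced regime $1/b=K/n$, which produces the extra factor $K$ inside the square root. The overall sample mean $\hat{\bar\bmu}-\bar\bmu$ is handled the same way and contributes at the same rate. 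A union bound over the $d(K-1)$ entries of $\hat\bU^m$ and over the $M$ machines converts $\delta'$ into $\delta/(d(K-1)M)$ and gives the $\log(dMK/\delta)$ factor in \eqref{conclusion:a3}; bound \eqref{conclusion:a4} follows by replacing $b$ with $Mb$ (equivalently $n$ with $N$) for the pooled means. Intersecting the covariance events (probability at least $1-\delta$) with the mean events (probability at least $1-\delta$) and adjusting constants then delivers the claimed probability at least $1-2\delta$.

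The main obstacle is the off-diagonal covariance entries: the products $(\bx_i-\bmu_{y_i})_j(\bx_i-\bmu_{y_i})_l$ with $j\neq l$ are genuinely sub-exponential rather than sub-Gaussian, so I need the correct sub-exponential norm bound for a product of two sub-Gaussians (via Cauchy--Schwarz on the Orlicz norms or a direct moment-generating-function computation) before Bernstein applies; getting the variance-proxy constant right here, together with checking that the sample-mean correction term is truly negligible relative to the $\sqrt{\log/n}$ leading rate, is where the care is needed. Everything else is a routine union bound.
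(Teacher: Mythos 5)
Your overall plan is sound, and for the mean bounds \eqref{conclusion:a3}--\eqref{conclusion:a4} it is essentially the paper's argument: the paper writes each column as a single contrast $\hat\bU^m_{\cdot k}-\bU_{\cdot k}=\tilde\bX^m\bh^k-\bU_{\cdot k}$ of the independent class means rather than splitting off $\hat{\bar\bmu}-\bar\bmu$ by the triangle inequality, but both give a sub-Gaussian entry with parameter of order $\sqrt{c_0\sigma^2/b}$ plus a union bound over the $d(K-1)M$ entries, and the probability bookkeeping ($1-\delta$ for the covariance events intersected with $1-\delta$ for the mean events) is identical. For the covariance bounds your route is genuinely different: the paper never touches entrywise Bernstein. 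It writes $\hat\bSigma^m-\bSigma=\frac{Kb}{Kb-K}\bA-\frac{K}{Kb-K}\bA'$ with \emph{both} $\bA$ and $\bA'$ exactly mean-zero quadratic deviations (the $-\bI_d$ centerings cancel identically in the decomposition), and controls $\opnorm{\bA}{\infty,\infty}$ uniformly via Proposition 1 of \citet{Pilanci15} applied to the finite set $\cY_1$ of normalized columns of $\bSigma^{1/2}$ together with their normalized pairwise sums, whose Gaussian width is $\lesssim\sqrt{\log d}$, recovering off-diagonal entries by polarization. Your product-of-sub-Gaussians-plus-Bernstein argument is more elementary and gives the same rate, but its sub-exponential tail carries an extra $\log(dM/\delta)/n$ term, so your version of \eqref{conclusion:a1} holds as stated only in the regime $n\gtrsim\log(dM/\delta)$; the paper's one-regime tail avoids this (mild) caveat.

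The genuine soft spot is \eqref{conclusion:a2}. Your correction term $b(\hat\bmu_k-\bmu_k)(\hat\bmu_k-\bmu_k)^\top$ is \emph{not} mean-zero: $\EE[(\hat\bmu_k-\bmu_k)(\hat\bmu_k-\bmu_k)^\top]=\bSigma/b$, and this mean is precisely what cancels the bias $\frac{K}{n-K}\bSigma_{jl}$ hidden in your leading term (whose expectation is $\frac{n}{n-K}\bSigma_{jl}$, not $\bSigma_{jl}$). If you discard the correction by the magnitude bound $\norm{\hat\bmu_k-\bmu_k}_\infty^2=O(K\log(dMK/\delta)/n)$, then for a single machine you need $n\gtrsim K^2\log(dMK/\delta)$ for it to be dominated by $\sqrt{\log(dM/\delta)/n}$, which is tolerable; but for the pooled estimator the argument breaks, because averaging over $M$ machines does not shrink a bound on each correction's \emph{magnitude}, and $K\log(dMK/\delta)/n$ is not $\lesssim\sqrt{\log(dM/\delta)/N}$ once $M$ is large. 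So ``the same Bernstein argument with $N$ in place of $n$'' does not go through as written. The fix is exactly the paper's device: center the correction (subtract $\bI_d$ inside the quadratic form, as in $\bA'$), observe that the deterministic means cancel identically, and then apply your Bernstein machinery to the average of the $KM$ centered products $\tilde\bz^{mk}\tilde\bz^{mk\top}-\bI_d$, which contributes at order $\sqrt{\log(dM/\delta)/N}\cdot b^{-1/2}$ and is genuinely negligible. Carrying the centering through also removes the $n\gtrsim K^2\log(dMK/\delta)$ requirement your crude bound imposes on \eqref{conclusion:a1}; everything else in your proposal is a correct, more elementary substitute for the paper's Gaussian-width argument.
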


	\begin{lemma}\label{lma:a2}
		\textit{Assume that the covariance matrix $\Sigma$ is nonsingular and the upper bounds \eqref{conclusion:a1}, \eqref{conclusion:a2}, \eqref{conclusion:a3}, and \eqref{conclusion:a4} in Lemma \ref{lma:a1} hold, then we have for all $m\in[M]$,
			\#
			\opnorm{\nabla\cL_m(\bW^*)}{\infty,\infty}& \leq c_1\sqrt{\frac{\log(dM/\delta)}{n}}\opnorm{\bW^*}{1,1}+c_2\sqrt{\frac{K\log(dMK/\delta)}{n}},\label{conclusion:a5}\\
			\opnorm{\frac{1}{M}\sum_{m=1}^M\nabla\cL_m(\bW^*)}{\infty,\infty} & \leq c_1\sqrt{\frac{\log(dM/\delta)}{N}}\opnorm{\bW^*}{1,1}+c_2\sqrt{\frac{K\log(dMK/\delta)}{N}},\label{conclusion:a6}
			\#
			where the constants $c_1,c_2$ are defined in Lemma \ref{lma:a1}.}
	\end{lemma}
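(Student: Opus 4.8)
The plan is to reduce both bounds to the concentration inequalities already supplied by Lemma \ref{lma:a1}, after rewriting the gradient in a centered form. Since $\cL_m(\bW)=\tfrac12\inner{\bW}{\hat\bSigma^m\bW}-\inner{\bW}{\hat\bU^m}$, differentiation gives $\nabla\cL_m(\bW)=\hat\bSigma^m\bW-\hat\bU^m$, so that $\nabla\cL_m(\bW^*)=\hat\bSigma^m\bW^*-\hat\bU^m$. The crucial observation is that the population gradient vanishes at $\bW^*$: by the closed form in Proposition \ref{proposition:2}, $\bW^*=\bSigma^{-1}\bU$, hence $\bSigma\bW^*=\bU$ and $\bSigma\bW^*-\bU=\zero$. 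Adding and subtracting this zero term yields the decomposition
\[
\nabla\cL_m(\bW^*)=(\hat\bSigma^m-\bSigma)\bW^*-(\hat\bU^m-\bU),
\]
which isolates a covariance-estimation error multiplying the oracle $\bW^*$ and a centroid-estimation error.

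First I would bound $\opnorm{\nabla\cL_m(\bW^*)}{\infty,\infty}$ by the triangle inequality into the two pieces above. The centroid term $\opnorm{\hat\bU^m-\bU}{\infty,\infty}$ is controlled directly by \eqref{conclusion:a3}. For the covariance term the only nonroutine ingredient is an elementary mixed-norm inequality: writing $(\bA\bB)_{ij}=\sum_k\bA_{ik}\bB_{kj}$ gives $|(\bA\bB)_{ij}|\leq\opnorm{\bA}{\infty,\infty}\norm{\bB_{\cdot j}}_1\leq\opnorm{\bA}{\infty,\infty}\opnorm{\bB}{1,1}$, so that $\opnorm{(\hat\bSigma^m-\bSigma)\bW^*}{\infty,\infty}\leq\opnorm{\hat\bSigma^m-\bSigma}{\infty,\infty}\opnorm{\bW^*}{1,1}$. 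Substituting \eqref{conclusion:a1} here and \eqref{conclusion:a3} into the centroid term produces \eqref{conclusion:a5} with exactly the constants $c_1,c_2$ from Lemma \ref{lma:a1}.

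The averaged estimate \eqref{conclusion:a6} follows by repeating the same argument verbatim on $\frac{1}{M}\sum_{m=1}^M\nabla\cL_m(\bW^*)=(\hat\bSigma-\bSigma)\bW^*-(\hat\bU-\bU)$, where $\hat\bSigma=\frac1M\sum_m\hat\bSigma^m$ and $\hat\bU=\frac1M\sum_m\hat\bU^m$, this time invoking the $1/N$-rate bounds \eqref{conclusion:a2} and \eqref{conclusion:a4} in place of \eqref{conclusion:a1} and \eqref{conclusion:a3}. I do not expect a genuine obstacle: once the population optimality $\bSigma\bW^*=\bU$ is used to center the gradient, the statement is bookkeeping. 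The only point needing care is selecting the operator inequality that pairs the covariance error with $\opnorm{\bW^*}{1,1}$ rather than a spectral or $\ell_2$ quantity, since this $\ell_{1,1}$-pairing is precisely what keeps the bound consistent with the $\ell_{1,1}$-regularized analysis used later in Lemma \ref{lma:1} and Theorem \ref{theorem:2}.
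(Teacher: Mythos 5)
Your proposal is correct and matches the paper's own proof essentially step for step: both center the gradient via $\bSigma\bW^*=\bU$ to get $\nabla\cL_m(\bW^*)=(\hat\bSigma^m-\bSigma)\bW^*+(\bU-\hat\bU^m)$, apply the triangle inequality with the $\ell_{\infty,\infty}$--$\ell_{1,1}$ pairing, and substitute the bounds \eqref{conclusion:a1}--\eqref{conclusion:a4}. Your explicit verification of the mixed-norm inequality $\opnorm{\bA\bB}{\infty,\infty}\leq\opnorm{\bA}{\infty,\infty}\opnorm{\bB}{1,1}$ is a detail the paper leaves implicit, but the argument is otherwise identical.
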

	
	By Lemma \ref{lma:a1} and \ref{lma:a2}, we have with probability at least $1-2\delta$ that inequalities \eqref{conclusion:a1}, \eqref{conclusion:a2}, \eqref{conclusion:a3}, \eqref{conclusion:a4}, \eqref{conclusion:a5} and \eqref{conclusion:a6} hold for some universal constants $c_1$ and $c_2$. Substituting these inequalities into the inequality \eqref{equ:a2}, we have
	\$
	\opnorm{\nabla\tilde \cL_1(\bW^*, \hat \bW_t)}{\infty,\infty} & \leq c_1\sqrt{\frac{\log(dM/\delta)}{N}}\opnorm{\bW^*}{1,1}+c_2\sqrt{\frac{K\log(dMK/\delta)}{N}}\\
	&\quad + 2c_1\sqrt{\frac{\log(dM/\delta)}{n}}\opnorm{\bW^*-\hat \bW_t}{1,1},
	\$
	with probability at least $1-2\delta$, which concludes the proof.
\end{proof}

\subsection{Proof of Theorem \ref{theorem:2}}
Before giving a proof of Theorem \ref{theorem:2}, we first state two technical lemmas that will be used. Let us begin with the definition of a series of subsets in $\RR^{d\times (K-1)}$. For any integer $s$, we define the subset $\cH(s)\subset\RR^{d\times (K-1)}$ by
\$
\cH(s)=\cbr{\bDelta\in\RR^{d\times (K-1)}\big|\bDelta_{\cdot k}\in\cC(\cA,3),\textnormal{ for some }|\cA|\leq s,\forall 1\leq k\leq K-1},
\$
where $\bDelta_{\cdot k}$ is the $k$th column of $\bDelta$. Furthermore, given an index set $\cS\subset[(d,K-1)]$, we define the cone $\tilde\cC(\cS,3)$ in $\RR^{d\times (K-1)}$ analogous to $\cC(\cA,3)$ in $\RR^d$ by
\$
\tilde\cC(\cS,3)=\cbr{\bDelta\in\RR^{d\times (K-1)}\big|\opnorm{\bDelta_{\cS^c}}{1,1}\leq3 \opnorm{\bDelta_\cS}{1,1}},
\$
where $\bDelta_\cS\in\RR^{d\times (K-1)}$ is the restriction of $\bDelta$ on the set $\cS$ and $\bDelta_{\cS^c}$ is analogously defined. It is shown in Lemma \ref{lma:a3} that the $t$th error $\hat\bW_t-\bW^*$ is in the subset $\cH(s)\cap \tilde \cC(\cS,3)$ under certain conditions, where $s=|\cS|$ and $\cS=\supp(\bW^*)$ is the support of $\bW^*$. In addition, we establish in Lemma \ref{lma:a4} a strong restricted convexity property of $\cL_1(\cdot)$, which proves to be useful. 

\begin{lemma}\label{lma:a3}
	\textit{Assume that the upper bound in Lemma \ref{lma:1} holds, $\lambda_{t+1}$ is chosen by \eqref{equ:lambda}, and the cardinality $|\cS|=s$, where $\cS=\supp(\bW^*)$ is the support of $\bW^*$. Then for any $t$,  the $(t+1)$th error $\hat \bW_{t+1}-\bW^*$ is in the subset $\cH(s)\cap\tilde \cC(\cS,3)$.}
\end{lemma}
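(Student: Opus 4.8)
The plan is to argue from the optimality (basic inequality) of the shifted Lasso-type estimator $\hat\bW_{t+1}$ and convert the resulting inequality into the two cone memberships through a triangle-inequality decomposition on the support $\cS$. First I would record the basic inequality: since $\hat\bW_{t+1}$ minimizes $\tilde\cL_1(\bW,\hat\bW_t)+\lambda_{t+1}\opnorm{\bW}{1,1}$ (this is \eqref{equ:3.2} rewritten through \eqref{equ:4.1}) and $\bW^*$ is feasible,
\[
\tilde\cL_1(\hat\bW_{t+1},\hat\bW_t)+\lambda_{t+1}\opnorm{\hat\bW_{t+1}}{1,1}\leq\tilde\cL_1(\bW^*,\hat\bW_t)+\lambda_{t+1}\opnorm{\bW^*}{1,1}.
\]
Writing $\bDelta=\hat\bW_{t+1}-\bW^*$ and using that $\tilde\cL_1(\cdot,\hat\bW_t)$ is convex (its quadratic part is $\frac12\inner{\bW}{\hat\bSigma^1\bW}$ with $\hat\bSigma^1\succeq0$), the first-order inequality gives $\tilde\cL_1(\hat\bW_{t+1},\hat\bW_t)-\tilde\cL_1(\bW^*,\hat\bW_t)\geq\inner{\nabla\tilde\cL_1(\bW^*,\hat\bW_t)}{\bDelta}$. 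Combining this with the display and bounding the inner product by H\"older's inequality yields
\[
\lambda_{t+1}\left(\opnorm{\hat\bW_{t+1}}{1,1}-\opnorm{\bW^*}{1,1}\right)\leq\opnorm{\nabla\tilde\cL_1(\bW^*,\hat\bW_t)}{\infty,\infty}\opnorm{\bDelta}{1,1}.
\]
By Lemma \ref{lma:1} and the choice \eqref{equ:lambda}, namely $\lambda_{t+1}=2a_n+2b_n\opnorm{\hat\bW_t-\bW^*}{1,1}$, the gradient norm is at most $\lambda_{t+1}/2$; dividing out $\lambda_{t+1}>0$ gives the key inequality $\opnorm{\hat\bW_{t+1}}{1,1}-\opnorm{\bW^*}{1,1}\leq\frac12\opnorm{\bDelta}{1,1}$.

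Next I would translate this into $\tilde\cC(\cS,3)$. On $\cS^c$ the oracle vanishes, so $\opnorm{(\bW^*+\bDelta)_{\cS^c}}{1,1}=\opnorm{\bDelta_{\cS^c}}{1,1}$, while on $\cS$ the reverse triangle inequality gives $\opnorm{(\bW^*+\bDelta)_{\cS}}{1,1}\geq\opnorm{\bW^*}{1,1}-\opnorm{\bDelta_{\cS}}{1,1}$. Substituting $\opnorm{\hat\bW_{t+1}}{1,1}=\opnorm{(\bW^*+\bDelta)_{\cS}}{1,1}+\opnorm{\bDelta_{\cS^c}}{1,1}$ into the key inequality and using $\opnorm{\bDelta}{1,1}=\opnorm{\bDelta_{\cS}}{1,1}+\opnorm{\bDelta_{\cS^c}}{1,1}$ reduces, after routine rearrangement, to $\opnorm{\bDelta_{\cS^c}}{1,1}\leq3\opnorm{\bDelta_{\cS}}{1,1}$, i.e. $\bDelta\in\tilde\cC(\cS,3)$.

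For the $\cH(s)$ membership I would exploit the feature that motivated the $\ell_{1,1}$ penalty: the objective in \eqref{equ:3.2} separates across the $K-1$ columns, since $\frac12\inner{\bW}{\hat\bSigma^1\bW}=\frac12\sum_k\bW_{\cdot k}^\top\hat\bSigma^1\bW_{\cdot k}$, the linear shift term, and $\opnorm{\bW}{1,1}=\sum_k\norm{\bW_{\cdot k}}_1$ all decouple. Hence each column $\hat\bW_{t+1,\cdot k}$ solves an independent Lasso problem, and rerunning the basic-inequality argument column by column — noting that the entrywise bound $\opnorm{\nabla\tilde\cL_1(\bW^*,\hat\bW_t)}{\infty,\infty}\leq\lambda_{t+1}/2$ controls the $\ell_\infty$ norm of every column of the gradient — gives $\norm{\bDelta_{\cdot k,\cA_k^c}}_1\leq3\norm{\bDelta_{\cdot k,\cA_k}}_1$ for $\cA_k=\supp(\bW^*_{\cdot k})$, with $|\cA_k|\leq s$. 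This is exactly $\bDelta_{\cdot k}\in\cC(\cA_k,3)$ for every $k$, so $\bDelta\in\cH(s)$.

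The main obstacle is precisely the $\cH(s)$ part: the global cone condition $\tilde\cC(\cS,3)$ does \emph{not} by itself imply a per-column cone bound, so $\cH(s)$ cannot be read off the matrix-level inequality. The resolution is the columnwise decoupling induced by the $\ell_{1,1}$ penalty together with the block-diagonal-in-columns structure of $\hat\bSigma^1$; establishing that separability cleanly, and then repeating the optimality argument on each column with the entrywise gradient bound from Lemma \ref{lma:1}, is the crux of the proof.
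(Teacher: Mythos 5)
Your proposal is correct and takes essentially the same route as the paper's proof: the basic inequality from optimality, convexity of $\tilde\cL_1(\cdot,\hat\bW_t)$, H\"older's inequality, and the bound $\opnorm{\nabla\tilde\cL_1(\bW^*,\hat\bW_t)}{\infty,\infty}\leq\lambda_{t+1}/2$ yield $\hat\bW_{t+1}-\bW^*\in\tilde\cC(\cS,3)$ at the matrix level, and the paper establishes the $\cH(s)$ membership exactly as you do, by decoupling \eqref{equ:3.2} into $K-1$ independent column-wise Lasso problems and repeating the same argument per column with the entrywise gradient bound. One loose phrase only: the separability comes from the identity $\inner{\bW}{\hat\bSigma^1\bW}=\sum_{k}\bW_{\cdot k}^{\top}\hat\bSigma^1\bW_{\cdot k}$ (which you in fact write down), not from any ``block-diagonal-in-columns structure'' of $\hat\bSigma^1$ itself.
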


\begin{lemma}[Strong Restricted Convexity]\label{lma:a4}
	\textit{Suppose the conditions in Theorem \ref{theorem:2} hold, then there exists some constant $c_3$ such that for $n\geq c_3s\log(d/\delta)$, the empirical loss function $\cL_1$ satisfies the strong restricted convexity property, i.e.,
		\#
		\cL_1(\bW^*+\bDelta)-\cL_1(\bW^*)-\inner{\nabla\cL_1(\bW^*)}{\bDelta}\geq \frac{\kappa_0}{4}\opnorm{\bDelta}{2,2}^2,\ \forall \bDelta\in\cH(s),\label{equ:RE}
		\#
		with probability at least $1-\delta$.}
\end{lemma}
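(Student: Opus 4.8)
The plan is to exploit the fact that $\cL_1$ is a quadratic form, so that the Bregman-type remainder on the left-hand side of \eqref{equ:RE} admits an exact closed form, and then to reduce the matrix inequality to a column-wise (vector) restricted eigenvalue bound for the local sample covariance $\hat\bSigma^1$. Since $\cL_1(\bW)=\frac{1}{2}\inner{\bW}{\hat\bSigma^1\bW}-\inner{\bW}{\hat\bU^1}$, the second-order Taylor expansion is exact, giving
\$
\cL_1(\bW^*+\bDelta)-\cL_1(\bW^*)-\inner{\nabla\cL_1(\bW^*)}{\bDelta}=\frac{1}{2}\inner{\bDelta}{\hat\bSigma^1\bDelta}=\frac{1}{2}\sum_{k=1}^{K-1}\bDelta_{\cdot k}^\top\hat\bSigma^1\bDelta_{\cdot k}.
\$
By the definition of $\cH(s)$, every column $\bDelta_{\cdot k}$ lies in some cone $\cC(\cA,3)$ with $|\cA|\leq s$, and $\opnorm{\bDelta}{2,2}^2=\sum_{k}\norm{\bDelta_{\cdot k}}_2^2$. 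Thus \eqref{equ:RE} follows, with the desired constant $\kappa_0/4$, once I prove the vector-level sample restricted eigenvalue bound
\$
\bzeta^\top\hat\bSigma^1\bzeta\geq\frac{\kappa_0}{2}\norm{\bzeta}_2^2,\qquad\forall\,\bzeta\in\cC(\cA,3),\ |\cA|\leq s,
\$
on an event of probability at least $1-\delta$; applying it to each $\bDelta_{\cdot k}$ and summing recovers the factor $\frac{1}{2}\cdot\frac{\kappa_0}{2}$.

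To prove this vector bound I would split $\bzeta^\top\hat\bSigma^1\bzeta=\bzeta^\top\bSigma\bzeta+\bzeta^\top(\hat\bSigma^1-\bSigma)\bzeta$. The population restricted eigenvalue condition (Assumption \ref{assumption:3}) lower bounds the first term by $\kappa_0\norm{\bzeta}_2^2$, so it suffices to control the deviation term by $\frac{\kappa_0}{2}\norm{\bzeta}_2^2$ uniformly over the cone. It is worth noting that the crude deterministic transfer, bounding $|\bzeta^\top(\hat\bSigma^1-\bSigma)\bzeta|\leq\opnorm{\hat\bSigma^1-\bSigma}{\infty,\infty}\norm{\bzeta}_1^2$ and then using $\norm{\bzeta}_1\leq4\sqrt{s}\norm{\bzeta}_2$ for cone vectors together with Lemma \ref{lma:a1}, only yields the suboptimal requirement $n\gtrsim s^2\log d$; recovering the stated near-optimal threshold $n\geq c_3 s\log(d/\delta)$ requires a sharper, genuinely probabilistic argument. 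A second, lower-order issue is that $\hat\bSigma^1$ centers by the sample means $\hat\bmu_k$ rather than $\bmu_k$: writing $\sum_{y_i=k}(\bx_i-\hat\bmu_k)(\bx_i-\hat\bmu_k)^\top=\sum_{y_i=k}(\bx_i-\bmu_k)(\bx_i-\bmu_k)^\top-b(\hat\bmu_k-\bmu_k)(\hat\bmu_k-\bmu_k)^\top$, the subtracted rank-one corrections contribute at most a constant multiple of $s\,\max_k\norm{\hat\bmu_k-\bmu_k}_\infty^2\,\norm{\bzeta}_2^2$ on the cone, which is negligible relative to $\kappa_0\norm{\bzeta}_2^2$ once $b\gtrsim s\log(dK/\delta)$ by the mean concentration underlying Lemma \ref{lma:a1}.

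The main obstacle is therefore the uniform deviation bound with the correct linear-in-$s$ rate. I would whiten the data, writing $\bx_i-\bmu_{y_i}=\bSigma^{1/2}\bz_i$ with $\bz_i$ isotropic and sub-Gaussian by Assumption \ref{assumption:2}, so that for fixed $\bzeta$ the variable $\bzeta^\top(\bx_i-\bmu_{y_i})=(\bSigma^{1/2}\bzeta)^\top\bz_i$ is sub-Gaussian with variance proxy proportional to $\bzeta^\top\bSigma\bzeta$ (whose per-coordinate scale is controlled by $\sup_j\bSigma_{jj}\leq c_0$) and its square is sub-exponential; a Bernstein inequality then controls $\bzeta^\top(\hat\bSigma^1-\bSigma)\bzeta$ pointwise. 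To pass from pointwise to uniform control over the infinite cone $\{\bzeta\in\cC(\cA,3):|\cA|\leq s,\ \norm{\bzeta}_2=1\}$ at the rate $\sqrt{s\log d/n}$, I would invoke a standard peeling and discretization argument, as in restricted-eigenvalue analyses of sub-Gaussian designs, exploiting that cone vectors are effectively $s$-sparse so that the relevant covering number scales like $\exp(c\,s\log d)$. Choosing $c_3$ large enough that this uniform deviation falls below $\kappa_0/2$ whenever $n\geq c_3 s\log(d/\delta)$ completes the argument. The delicate points are the sub-Gaussian (as opposed to Gaussian) tail handling in the empirical-process step and ensuring the peeling delivers the sharp $s\log d$ rather than $s^2\log d$ dependence.
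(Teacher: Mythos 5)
Your proposal is correct and its skeleton coincides with the paper's: both exploit that $\cL_1$ is quadratic so the Bregman remainder equals $\frac{1}{2}\inner{\bDelta}{\hat\bSigma^1\bDelta}$ exactly, reduce column-wise over $\cH(s)$ to a vector restricted-eigenvalue statement $\bzeta^\top\hat\bSigma^1\bzeta\geq\frac{\kappa_0}{2}\norm{\bzeta}_2^2$ on $\cC(\cA,3)$ (which is precisely the reduction stated in the paper's remark after the lemma), and split off the population RE from a uniform deviation term that must be beaten at the sharp rate $\sqrt{s\log d/n}$ rather than the $s^2\log d$ rate of the crude $\ell_{\infty,\infty}$ transfer -- a trap you correctly identify and sidestep. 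Where you genuinely diverge is the probabilistic engine for the uniform bound: the paper normalizes by $\bzeta^\top\bSigma\bzeta$, applies Proposition 1 of \citet{Pilanci15} to the spherical image $\cY_2=\{\bSigma^{1/2}\bzeta/\norm{\bSigma^{1/2}\bzeta}_2\}$ of the cone, and controls the Gaussian width via H\"older together with the RE condition, $\WW(\cY_2)\leq 4\sqrt{s/\kappa_0}\,\EE\norm{\bSigma^{1/2}\bg}_\infty\lesssim\sqrt{c_0 s\log d/\kappa_0}$, obtaining the linear-in-$s$ rate in one stroke with no covering argument; you instead propose whitening, a Bernstein bound pointwise, and a discretization-plus-peeling argument over effectively $s$-sparse vectors -- essentially the \citet{Rudelson13} reduction-principle route, which the paper's own remark explicitly names as the known alternative. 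The trade-off: the paper's Gaussian-width route is shorter and outsources the chaining difficulty to a cited proposition, while your route is more classical and self-contained in spirit but defers its hardest step (the peeling delivering $s\log d$ rather than $s^2\log d$) to ``standard arguments,'' which is exactly where the work lives; to make it airtight you would need to carry out that covering bound, e.g.\ via the Rudelson--Zhou reduction to exactly sparse vectors. Your treatment of the sample-mean centering (a deterministic rank-one bound giving a negligible $O(s\log(dK/\delta)/b)\norm{\bzeta}_2^2$ correction) is cruder than the paper's, which runs the same Gaussian-width machinery on the $\bA'$ term, but both land at comparable lower-order sample-size requirements under the balanced assumption, so this is a stylistic rather than substantive difference.
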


\begin{remark}
	\textit{By definition of $\cL_1(\cdot)$, the strong restricted convexity is reduced to the following inequality,
		\$
		\langle\bDelta,\hat\bSigma^1\bDelta\rangle\geq\frac{\kappa_0}{2}\opnorm{\bDelta}{2,2}^2,\ \forall \bDelta\in\cH(s).
		\$
		As shown in the proof, we prove this inequality by establishing the restricted eigenvalue $RE(\kappa_0/2,s)$ property on the empirical covariance matrix $\hat\bSigma^1$. When we are finishing the paper, we find that the restricted eigenvalue property has been established in \citet{Rudelson13}. They proved the RE property via a reduction principle while in this paper we prove the RE property directly by applying Proposition 1 in \citet{Pilanci15}.}
\end{remark}

\begin{proof}[Proof of Theorem \ref{theorem:2}]\label{thmproof:2}
	By definition of $\tilde\cL_1(\cdot,\cdot)$, we have
	\#
	&\tilde\cL_1(\hat \bW_{t+1},\hat \bW_t)-\tilde\cL_1( \bW^*,\hat \bW_t)\notag\\
	=&\cL_1(\hat \bW_{t+1})-\cL_1(\bW^*)+\inner{\frac{1}{M}\sum_{m=1}^M\nabla\cL_m(\hat \bW_t)-\nabla\cL_1(\hat \bW_t)}{\hat\bW_{t+1}-\bW^*}.\label{equ:3.5.1}
	\#
	Since Assumption \ref{assumption:1} and \ref{assumption:2} hold and $\sup\bSigma_{jj}\leq c_0$, the upper bound in Lemma \ref{lma:1} holds for some universal constants $c_1$ and $c_2$ with probability at least $1-2\delta$. Since $\lambda_{t+1}$ is chosen by \eqref{equ:lambda}, the $(t+1)$th error $\hat\bW_{t+1}-\bW^*\in \cH(s)\cap\tilde\cC(\cS,3)$ by Lemma \ref{lma:a3}, where $s=|\cS|$ and $\cS=\supp(\bW^*)$ is the support of $\bW^*$. In addition, since Assumption \ref{assumption:3} also holds, the empirical loss function $\cL_1$ satisfies the strong restricted convexity property \eqref{equ:RE} when $n\geq c_3s\log(d/\delta)$ for some universal constant $c_3$ with probability at least $1-\delta$ by Lemma \ref{lma:a4}. In the remaining part of analysis, we first assume the conclusions in Lemma \ref{lma:1}, \ref{lma:a3} and \ref{lma:a4} hold and we will go back to the high-probability language in the end. 
	
	Combining the fact that $\hat\bW_{t+1}-\bW^*\in\cH(s)$ and the strong restricted convexity property \eqref{equ:RE} of $\cL_1$, we obtain the following inequality,
	\#
	\cL_1(\hat \bW_{t+1})-\cL_1(\bW^*)\geq\inner{\nabla\cL_1(\bW^*)}{\hat \bW_{t+1}-\bW^*}+\frac{\kappa_0}{4}\opnorm{\hat \bW_{t+1}-\bW^*}{2,2}^2.\label{equ:3.5.2}
	\#
	Substitute this inequality into the equation \eqref{equ:3.5.1}, we get
	\#
	&\tilde\cL_1(\hat \bW_{t+1},\hat \bW_t)-\tilde\cL_1( \bW^*,\hat \bW_t)\notag\\
	\geq&\inner{\nabla\cL_1(\bW^*)+\frac{1}{M}\sum_{m=1}^M\nabla\cL_m(\hat \bW_t)-\nabla\cL_1(\hat \bW_t)}{\hat \bW_{t+1}-\bW^*}+\frac{\kappa_0}{4}\opnorm{\hat \bW_{t+1}-\bW^*}{2,2}^2\notag\\
	=&\inner{\nabla\tilde\cL_1(\bW^*,\hat \bW_t)}{\hat \bW_{t+1}-\bW^*}+\frac{\kappa_0}{4}\opnorm{\hat \bW_{t+1}-\bW^*}{2,2}^2,\label{equ:3.5.3}
	\#
	where the equality follows from the definition of $\nabla\tilde\cL_1(\bW^*,\hat\bW_t)$. By the optimality \eqref{equ:3.2} of $\hat \bW_{t+1}$, we can derive the following inequality,
	\#
	\tilde\cL_1(\hat \bW_{t+1},\hat \bW_t)-\tilde\cL_1( \bW^*,\hat \bW_t)+\lambda_{t+1}\opnorm{\hat \bW_{t+1}}{1,1}-\lambda_{t+1}\opnorm{\bW^*}{1,1}\leq 0.\label{equ:3.5.4}
	\#
	Combining the inequalities \eqref{equ:3.5.3} and \eqref{equ:3.5.4}, the upper bound in Lemma \ref{lma:1}, and the definition \eqref{equ:lambda} of $\lambda_{t+1}$, we get
	\#
	&\lambda_{t+1}\opnorm{\bW^*}{1,1}-\lambda_{t+1}\opnorm{\hat \bW_{t+1}}{1,1}\geq \tilde\cL_1(\hat \bW_{t+1},\hat \bW_t)-\tilde\cL_1( \bW^*,\hat \bW_t)\notag\\
	\geq&\inner{\nabla\tilde\cL_1(\bW^*,\hat \bW_t)}{\hat \bW_{t+1}-\bW^*}+\frac{\kappa_0}{4}\opnorm{\hat \bW_{t+1}-\bW^*}{2,2}^2\notag\\
	\geq&-\opnorm{\nabla\tilde\cL_1(\bW^*,\hat \bW_t)}{\infty,\infty}\opnorm{\hat \bW_{t+1}-\bW^*}{1,1}+\frac{\kappa_0}{4}\opnorm{\hat \bW_{t+1}-\bW^*}{2,2}^2\notag\\
	\geq&-\frac{\lambda_{t+1}}{2}\opnorm{\hat \bW_{t+1}-\bW^*}{1,1}+\frac{\kappa_0}{4}\opnorm{\hat \bW_{t+1}-\bW^*}{2,2}^2,\label{equ:3.5.5}
	\#
	where the third inequality follows from the H$\rm{\ddot{o}}$lder's inequality. 
	Combining this inequality with the fact that $\hat\bW_{t+1}-\bW^*\in\tilde\cC(\cS,3)$, we obtain that
	\#
	\frac{\kappa_0}{4}\opnorm{\hat \bW_{t+1}-\bW^*}{2,2}^2&\leq \lambda_{t+1}\opnorm{\bW^*}{1,1}-\lambda_{t+1}\opnorm{\hat \bW_{t+1}}{1,1}+\frac{\lambda_{t+1}}{2}\opnorm{\hat \bW_{t+1}-\bW^*}{1,1}\notag\\
	&\leq\frac{3\lambda_{t+1}}{2}\opnorm{\hat \bW_{t+1}-\bW^*}{1,1}\notag\\
	&\leq6\sqrt{s}\lambda_{t+1}\opnorm{\hat \bW_{t+1}-\bW^*}{2,2},\label{equ:3.5.6}
	\#
	where the last inequality holds since $\opnorm{\hat\bW_{t+1}-\bW^*}{1,1}\leq4\sqrt{s}\opnorm{\hat\bW_{t+1}-\bW^*}{2,2}$ when $\hat\bW_{t+1}-\bW^*\in\tilde{\cC}(\cS,3)$.
	Dividing $\opnorm{\hat \bW_{t+1}-\bW^*}{2,2}$ from both sides of the inequality \eqref{equ:3.5.6}, we have 
	\#
	\opnorm{\hat \bW_{t+1}-\bW^*}{2,2}\leq\frac{24\sqrt{s}\lambda_{t+1}}{\kappa_0}.\label{equ:3.5.7}
	\#
	Again by the fact that $\hat\bW_{t+1}-\bW^*\in\tilde\cC(\cS,3)$, we have
	\#
	\opnorm{\hat \bW_{t+1}-\bW^*}{1,1}\leq4\sqrt{s}\opnorm{\hat \bW_{t+1}-\bW^*}{2,2}\leq\frac{96s\lambda_{t+1}}{\kappa_0}.\label{equ:3.5.8}
	\#
	Recall the definition \eqref{equ:lambda} of $\lambda_{t+1}$, the inequality \eqref{equ:3.5.7} and \eqref{equ:3.5.8} are exactly what we want. Finally, let us conclude the proof by noting that this analysis holds for all $t$ and thus the inequalities \eqref{equ:3.5.7} and \eqref{equ:3.5.8} hold for all $t$ with probability $1-3\delta$ when $n\geq c_3s\log(d/\delta)$.
\end{proof}

\subsection{Proof of Theorem \ref{theorem:3}}
\begin{proof}[Proof of Theorem \ref{theorem:3}]\label{thmproof:3}
	Setting $\frak{a} = \frac{192sa_n}{\kappa_0}$ and $\frak b = \frac{192sb_n}{\kappa_0}$, we can rewrite Theorem \ref{theorem:2} in the following iteration form,
	\$
	\opnorm{\hat \bW_{t+1}-\bW^*}{1,1}\leq \frak a+\frak b\opnorm{\hat \bW_t-\bW^*}{1,1},
	\$
	Applying Theorem \ref{theorem:2} iteratively, we can obtain the following upper bound on $\opnorm{\hat\bW_{t+1}-\bW^*}{1,1}$,
	\#
	\opnorm{\hat \bW_{t+1}-\bW^*}{1,1}&\leq \frak a+\frak b\opnorm{\hat \bW_t-\bW^*}{1,1}\notag\\
	&\leq \frak a+\frak b\left(\frak a+\frak b\opnorm{\hat \bW_{t-1}-\bW^*}{1,1}\right)\notag\\
	&\leq\cdots\notag\\
	&\leq \frac{\frak a(1-\frak b^{t+1})}{1-\frak b}+\frak b^{t+1}\opnorm{\hat \bW_0-\bW^*}{1,1}.\label{equ:a}
	\#
	Then by Theorem \ref{theorem:2} again, we can upper bound $\opnorm{\hat\bW-\bW^*}{2,2}$ as follows,
	\$
	\opnorm{\hat \bW_{t+1}-\bW^*}{2,2}&\leq\frac{\frak a}{4\sqrt{s}}+\frac{\frak b}{4\sqrt{s}}\opnorm{\hat \bW_{t}-\bW^*}{1,1}\\
	&\leq\frac{\frak a(1-\frak b^{t+1})}{4\sqrt{s}(1-\frak b)}+\frac{\frak b^{t+1}}{4\sqrt{s}}\opnorm{\hat \bW_{0}-\bW^*}{1,1},
	\$
	where the second inequality follows by \eqref{equ:a}.
\end{proof}

\section{PROOF OF AUXILIARY LEMMAS}
\subsection{Proof of Lemma \ref{lma:a1}}

\begin{proof}[Proof of Lemma \ref{lma:a1}]\label{proof:a1}
	First, let us upper bound the $\ell_{\infty,\infty}$ norm of $\hat\bSigma^m-\bSigma$ for a fixed $m$. By definition of $\hat\bSigma^m$ and Assumption \ref{assumption:1}, we have 
	\$
	\hat\bSigma^m-\bSigma & = \frac{1}{Kb-K}\sum_{k=1}^K\sum_{y_i^m=k}\cbr{(\bx_i^m-\bmu_k)(\bx_i^m-\bmu_k)^{\top}-\bSigma}\notag\\
	& - \frac{1}{Kb-K}\sum_{k=1}^K\cbr{b(\bar \bx^{mk}-\bmu_k)(\bar\bx^{mk}-\bmu_k)^{\top}-\bSigma},
	\$
	where $\bar\bx^{mk}=\frac{1}{b}\sum_{y^m_i=k}\bx_i^m$. For convenience, we introduce the following transformed random vectors,
	\$
	\tilde\bz_i^m & =\bSigma^{-1/2}(\bx_i^m-\bmu_k),\quad \textnormal{if }y_i^m=k,\quad\forall i,\\
	\tilde\bz^{mk} & =\sqrt{b}\bSigma^{-1/2}(\bar\bx^{mk}-\bmu_k),\quad \forall k.
	\$ 
	Note that conditional on $\cbr{y_i^m}_{i=1}^n$, $\cbr{\tilde\bz_i^m}_{i=1}^n$ are i.i.d. zero mean $\sigma$-sub-Gaussian random vectors and $\cbr{\tilde\bz^{mk}}_{k=1}^K$ are i.i.d. zero mean $\sigma$-sub-Gaussian random vectors, though $\cbr{\tilde\bz_i^m}_{i=1}^n$ and $\cbr{\tilde\bz^{mk}}_{k=1}^K$ can be dependent. Using these transformed random vectors, we can rewrite the error $\hat\bSigma^m-\bSigma$ as
	\$
	\hat\bSigma^m-\bSigma = \frac{Kb}{Kb-K}\bA - \frac{K}{Kb-K}\bA',
	\$
	where
	\#
	\bA & = \bSigma^{1/2}\rbr{\frac{1}{n}\sum_{i=1}^n\tilde\bz_i^m\tilde\bz_i^{m\top}-\bI_d}\bSigma^{1/2}, \label{A}\\
	\bA' & = \bSigma^{1/2}\rbr{\frac{1}{K}\sum_{k=1}^K\tilde\bz^{mk}\tilde\bz^{mk\top}-\bI_d}\bSigma^{1/2}.\label{A'}
	\#
	By the triangle inequality, we have
	\#\label{equ:b1}
	\opnorm{\hat\bSigma^m-\bSigma}{\infty,\infty}\leq \frac{Kb}{Kb-K}\opnorm{\bA}{\infty,\infty} + \frac{K}{Kb-K}\opnorm{\bA'}{\infty,\infty}.
	\#
	In the remaining part of the proof, we will apply Proposition 1 in \citet{Pilanci15} to upper bound the $\ell_{\infty,\infty}$ norm of $\bA$ and $\bA'$ separately, which leads to an upper bound on the $\ell_{\infty,\infty}$ norm of $\hat\bSigma^m-\bSigma$ by \eqref{equ:b1}. For reader's convenience, we present this proposition below.
	
	\begin{proposition}\label{proposition:a1}
		\textit{Let $\cbr{\bs_i}_{i=1}^n\subset\RR^d$ be i.i.d. samples from a zero-mean $\sigma$-sub-Gaussian distribution with $\Cov(\bs_i)=\bI_d$. Then there exist some universal constants $c_1,c_2$ such that for any subset $\cY\subset\SSS^{d-1}$, we have with probability at least $1-e^{-\frac{c_2n\delta^2}{\sigma^4}}$,
			\$
			\sup_{\etab\in\cY}\abr{\etab^{\top}\rbr{\frac{\bS^{\top}\bS}{n}-\bI_d}\etab}\leq c_1\frac{\WW(\cY)}{\sqrt{n}}+\delta,
			\$
			where $\bS^\top=\rbr{\bs_1,\ldots,\bs_n}$ and $\WW(\cY)$ is the Gaussian width of the subset $\cY$.  Specifically, $\WW(\cY)$ is defined by
			\$
			\WW(\cY)=\EE[\sup_{\etab\in\cY}\abr{\inner{\bg}{\etab}}],
			\$
			where the expectation is taken on $\bg\in\RR^d$, which is a standard normal random vector.}
	\end{proposition}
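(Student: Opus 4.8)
The plan is to view the quantity as the supremum of a centered empirical process and to control it by generic chaining, expressing its size through the Gaussian width $\WW(\cY)$. Set $X_{\etab}=\frac{1}{n}\sum_{i=1}^n\inner{\bs_i}{\etab}^2$ and $Z=\sup_{\etab\in\cY}\abr{\etab^{\top}\rbr{\frac{\bS^{\top}\bS}{n}-\bI_d}\etab}$. Since $\Cov(\bs_i)=\bI_d$ and $\norm{\etab}_2=1$ give $\EE[X_{\etab}]=1$, we have
\[
Z=\sup_{\etab\in\cY}\abr{X_{\etab}-\EE X_{\etab}}.
\]
First I would bound $\EE[Z]$, and only afterwards upgrade it to the stated high-probability tail.

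The key structural input is a Bernstein-type increment bound for the process $\etab\mapsto X_{\etab}$. For $\etab,\etab'\in\SSS^{d-1}$ the increment equals $\frac{1}{n}\sum_{i=1}^n\inner{\bs_i}{\etab-\etab'}\inner{\bs_i}{\etab+\etab'}$, a centered average of products of $\sigma$-sub-Gaussian variables (note $\EE[X_{\etab}-X_{\etab'}]=0$ since both are unit vectors); each summand is therefore sub-exponential with parameter of order $\sigma^2\norm{\etab-\etab'}_2$, using $\norm{\etab+\etab'}_2\leq2$. A Bernstein inequality then yields the mixed tail
\[
\PP\rbr{\abr{X_{\etab}-X_{\etab'}}\geq t}\leq2\exp\rbr{-c\,n\min\rbr{\frac{t^2}{\sigma^4\norm{\etab-\etab'}_2^2},\frac{t}{\sigma^2\norm{\etab-\etab'}_2}}},
\]
so the process obeys a sub-Gaussian increment condition for small $t$ and a sub-exponential one for large $t$, both measured in the Euclidean metric.

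With this increment bound I would invoke a generic-chaining bound for processes with mixed tails (e.g.\ Dirksen's theorem, or Talagrand's majorizing-measure theorem) to obtain $\EE[Z]\lesssim \frac{\sigma^2}{\sqrt{n}}\gamma_2(\cY,\norm{\cdot}_2)+\frac{\sigma^2}{n}\gamma_1(\cY,\norm{\cdot}_2)$, where $\gamma_2,\gamma_1$ are the usual Talagrand functionals carrying the sub-Gaussian and sub-exponential scales. Since $\cY\subset\SSS^{d-1}$ has Euclidean diameter at most $2$, the $\gamma_2$ term dominates in the regime of interest; and by the majorizing-measure theorem $\gamma_2(\cY,\norm{\cdot}_2)$ is equivalent, up to universal constants, to the Gaussian width $\WW(\cY)$, the latter being the expected supremum of the canonical Gaussian process indexed by $\cY$. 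This gives $\EE[Z]\lesssim \sigma^2\WW(\cY)/\sqrt{n}$, which is the $c_1\WW(\cY)/\sqrt{n}$ term after absorbing $\sigma^2$ into the universal constant.

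Finally, to pass from the bound in expectation to the claimed probability, I would apply the tail part of the same mixed-tail chaining theorem (or, equivalently, a Talagrand-type concentration inequality for the supremum of an empirical process): the sub-Gaussian portion of the tail controls additional deviations of size $\delta$ with probability at least $1-e^{-c_2n\delta^2/\sigma^4}$, producing $Z\leq c_1\WW(\cY)/\sqrt{n}+\delta$ on that event. The main obstacle is the uniform control over the continuum $\cY$: a naive union bound over a net is too lossy, so the whole argument rests on the generic-chaining machinery for mixed sub-Gaussian/sub-exponential increments together with the identification of the resulting $\gamma_2$ functional with $\WW(\cY)$ by universal constants.
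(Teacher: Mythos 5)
First, a point of comparison: the paper does not prove this proposition at all --- it is quoted verbatim as Proposition 1 of \citet{Pilanci15} and used as an imported tool --- so your attempt is being measured against a citation rather than an in-paper argument. The route you outline is indeed the standard machinery behind results of this type: your increment computation is correct ($X_{\etab}-X_{\etab'}=\frac{1}{n}\sum_{i=1}^n\inner{\bs_i}{\etab-\etab'}\inner{\bs_i}{\etab+\etab'}$ is centered because both vectors are unit and $\Cov(\bs_i)=\bI_d$, and each summand is sub-exponential with parameter $\lesssim\sigma^2\norm{\etab-\etab'}_2$), the mixed-tail chaining theorem applies, and the majorizing-measure identification of $\gamma_2(\cY,\norm{\cdot}_2)$ with $\WW(\cY)$ up to universal constants is legitimate (the absolute-value width only helps, since $\gamma_2(\cY)\lesssim\EE\sup_{\etab\in\cY\cup-\cY}\inner{\bg}{\etab}=\WW(\cY)$).

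The genuine gap is your dismissal of the sub-exponential term. Bounded Euclidean diameter controls the \emph{deviation} part of the mixed-tail bound, not the size of $\gamma_1(\cY,\norm{\cdot}_2)$, which can be of order $\gamma_2^2$: for $\cY=\SSS^{d-1}$ one has $\gamma_2\asymp\sqrt{d}$ but $\gamma_1\asymp d$. Moreover the term is real, not an artifact of chaining: with $\cY=\SSS^{d-1}$ and $d\gg n$ the matrix $\bS^{\top}\bS$ is rank-deficient, so $\sup_{\etab\in\cY}\abr{\etab^{\top}(\bS^{\top}\bS/n-\bI_d)\etab}\geq 1$ and in fact is of order $d/n=\WW(\cY)^2/n\gg\WW(\cY)/\sqrt{n}$; hence no argument can establish the displayed inequality with the claimed tail in that regime. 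Similarly, even for a singleton $\cY$ the upper tail of $\frac{1}{n}\sum_i\inner{\bs_i}{\etab}^2$ is sub-exponential, so the pure $e^{-c_2n\delta^2/\sigma^4}$ exponent cannot hold for $\delta\gtrsim\sigma^2$; the correct exponent is $n\min\cbr{\delta^2/\sigma^4,\,\delta/\sigma^2}$. What your argument actually delivers --- and what is true --- is $Z\lesssim\sigma^2\rbr{\gamma_2/\sqrt{n}+\gamma_2^2/n}+\delta$ with the mixed exponent (using the Krahmer--Mendelson--Rauhut refinement, which for quadratic processes replaces $\gamma_1$ by $\gamma_2^2$). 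This reduces to the stated bound precisely when $\WW(\cY)\lesssim\sqrt{n}$ and $\delta\lesssim\sigma^2$, which is the regime in which the paper invokes the proposition: $\cY_1$ is finite with $\WW(\cY_1)\lesssim\sqrt{\log d}$, and $\cY_2$ satisfies $\WW(\cY_2)\lesssim\sqrt{s\log d/\kappa_0}$ under the sample-size condition $n\gtrsim s\log d$, with $\delta$ small in both applications. So to repair your proof you must either carry the quadratic term and restrict to that regime, or weaken the exponent to $\min(\delta,\delta^2)$ form; the diameter argument is the one step that genuinely fails --- a looseness your write-up inherits from, and shares with, the literal form of the cited statement itself.
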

	
	Let us first bound the $\ell_{\infty,\infty}$ norm of $\bA$ by applying Proposition \ref{proposition:a1} to $\cbr{\bs_i}_{i=1}^n=\cbr{\tilde\bz_i^m}_{i=1}^n$ and $\cY=\cY_1\subset\SSS^{d-1}$ defined by
	\$
	\cY_1=\cbr{\bbeta_j=\frac{\bSigma^{1/2}_{\cdot j}}{\norm{\bSigma^{1/2}_{\cdot j}}_2}}_{1\leq j\leq d}\bigcup\cbr{\bgamma_{jk}=\frac{\bbeta_j+\bbeta_k}{\norm{\bbeta_j+\bbeta_k}_2}}_{1\leq j<k\leq d},
	\$
	where $\bSigma^{1/2}_{\cdot j}$ is the $j$th column of the matrix $\bSigma^{1/2}$. Note that the subset $\cY_1$ is a finite set with at most $\frac{d(d+1)}{2}$ elements. Since $\cbr{\tilde\bz_i^m}_{i=1}^n$ are i.i.d. samples from a zero mean $\sigma$-sub-Gaussian distribution with $\Cov(\tilde\bz_i^m)=\bI_d$, we have with probability at least $1-e^{-\frac{c_2n\delta^2}{\sigma^4}}$,
	\#\label{equ:b2}
	\sup_{\etab\in\cY_1}\abr{\etab^\top\rbr{\frac{1}{n}\sum_{i=1}^n\tilde\bz_i^m\tilde\bz_i^{m\top}-\bI_d}\etab}\leq c_1\frac{\WW(\cY_1)}{\sqrt{n}}+\delta,
	\#
	for some universal constants $c_1$ and $c_2$. By definition of $\bA$, we have 
	\$
	\opnorm{\bA}{\infty,\infty} & = \sup_{1\leq j\leq k\leq d} |\bA_{jk}| = \sup_{1\leq j\leq k\leq d}\abr{\bSigma^{1/2^\top}_{\cdot j}\rbr{\frac{1}{n}\sum_{i=1}^n\tilde\bz_i^m\tilde\bz_i^{m\top}-\bI_d}\bSigma^{1/2}_{\cdot k}}\notag \\
	& = \sup_{1\leq j\leq k\leq d} \abr{\bbeta_j^{\top}\rbr{\frac{1}{n}\sum_{i=1}^n\tilde\bz_i^m\tilde\bz_i^{m\top}-\bI_d}\bbeta_k}\cdot (\bSigma_{jj}\bSigma_{kk})^{1/2}\notag \\
	& \overset{(\rm{i})}{\leq} c_0\cdot \sup_{1\leq j\leq k\leq d} \abr{\bbeta_j^{\top}\rbr{\frac{1}{n}\sum_{i=1}^n\tilde\bz_i^m\tilde\bz_i^{m\top}-\bI_d}\bbeta_k},
	\$
	where the inequality $(\rm{i})$ follows from the condition $\sup_{1\leq j\leq d}\bSigma_{jj}\leq c_0$. By the polarization identity, we have 
	\$
	& \bbeta_j^{\top}\rbr{\frac{1}{n}\sum_{i=1}^n\tilde\bz_i^m\tilde\bz_i^{m\top}-\bI_d}\bbeta_k\notag\\
	= & \frac{1}{2}(\bbeta_j+\bbeta_k)^{\top}\rbr{\frac{1}{n}\sum_{i=1}^n\tilde\bz_i^m\tilde\bz_i^{m\top}-\bI_d}(\bbeta_j+\bbeta_k)\notag\\
	- & \frac{1}{2}\bbeta_j^{\top}\rbr{\frac{1}{n}\sum_{i=1}^n\tilde\bz_i^m\tilde\bz_i^{m\top}-\bI_d}\bbeta_j-\frac{1}{2}\bbeta_k^{\top}\rbr{\frac{1}{n}\sum_{i=1}^n\tilde\bz_i^m\tilde\bz_i^{m\top}-\bI_d}\bbeta_k.
	\$
	Then by the triangle inequality, we have 
	\$
	\opnorm{\bA}{\infty,\infty} & \leq  c_0\cdot \sup_{1\leq j\leq k\leq d} \abr{\bbeta_j^{\top}\rbr{\frac{1}{n}\sum_{i=1}^n\tilde\bz_i^m\tilde\bz_i^{m\top}-\bI_d}\bbeta_k} \notag \\
	& \leq c_0 \cdot \frac{1}{2} \cdot \norm{\bbeta_j+\bbeta_k}_2^2 \cdot \sup_{1\leq j\leq k\leq d} \abr{\bgamma_{jk}^{\top}\rbr{\frac{1}{n}\sum_{i=1}^n\tilde\bz_i^m\tilde\bz_i^{m\top}-\bI_d}\bgamma_{jk}} \notag \\
	& + c_0 \cdot \sup_{1\leq j\leq d}\abr{\bbeta_j^\top\rbr{\frac{1}{n}\sum_{i=1}^n\tilde\bz_i^m\tilde\bz_i^{m\top}-\bI_d}\bbeta_j}\notag \\
	& \leq 3c_0\cdot  \sup_{\etab\in\cY_1}\abr{\etab^\top\rbr{\frac{1}{n}\sum_{i=1}^n\tilde\bz_i^m\tilde\bz_i^{m\top}-\bI_d}\etab},
	\$
	where the last inequality holds since $\norm{\bbeta_j+\bbeta_k}_2^2\leq 4$ and $\cbr{\bbeta_j,\bgamma_{jk}}_{1\leq j\leq k\leq d}\subset\cY_1$. Combining this inequality with the upper bound in \eqref{equ:b2}, we can upper bound the $\ell_{\infty,\infty}$ norm of $\bA$ in terms of the Gaussian width $\WW(\cY_1)$,
	\$
	\opnorm{\bA}{\infty,\infty} & \leq 3c_0\cdot  \sup_{\etab\in\cY_1}\abr{\etab^\top\rbr{\frac{1}{n}\sum_{i=1}^n\tilde\bz_i^m\tilde\bz_i^{m\top}-\bI_d}\etab}\notag\\
	& \leq 3c_0\rbr{c_1\frac{\WW(\cY_1)}{\sqrt{n}}+\delta},
	\$
	with probability at least $1-e^{-\frac{c_2n\delta^2}{\sigma^4}}$. Now we give an upper bound on the Gaussian width $\WW(\cY_1)$ using the maximal inequality \cite{Wainwright19}. Since the random variable $\inner{\bg}{\etab}$ is Gaussian with zero mean and variance 1 for any $\etab\in\cY_1\subset \SSS^{d-1}$, and the cardinality of $\cY_1$ is finite less than $\frac{d(d+1)}{2}$, we have
	\$
	\WW(\cY_1)=\EE[\sup_{\etab\in\cY_1}|\inner{\bg}{\etab}|]\leq \sqrt{2\log (d(d+1))}.
	\$
	Therefore, we have with probability at least $1-e^{-\frac{c_2n\delta^2}{\sigma^4}}$,
	\#\label{equ:b3}
	\opnorm{\bA}{\infty,\infty} \leq c_0\cdot (c_1\sqrt{\frac{\log (d)}{n}}+\delta),
	\#
	for some universal constants $c_1$ and $c_2$. Applying the same argument to $\bA'$, we obtain the following upper bound
	\#\label{equ:b4}
	\opnorm{\bA'}{\infty,\infty}\leq c_0\cdot (c_1\sqrt{\frac{\log (d)}{K}}+\delta),
	\#
	with probability at least $1-e^{-\frac{c_2K\delta^2}{\sigma^2}}$. Combining inequalities \eqref{equ:b1}, \eqref{equ:b3}, and \eqref{equ:b4}, and the observation that $\bA'$-term is negligible in $\hat\bSigma^m-\bSigma$ relative to $\bA$-term, we have with probability at least $1 - e^{-\frac{c_2n\delta^2}{\sigma^4}}$,
	\#\label{equ:b5}
	\opnorm{\hat\bSigma^m-\bSigma}{\infty,\infty}\leq\frac{b}{b-1}\opnorm{\bA}{\infty,\infty}+\frac{1}{b-1}\opnorm{\bA'}{\infty,\infty}\leq c_0\cdot c_1(\sqrt{\frac{\log (d)}{n}}+\delta),
	\#
	for some universal constants $c_1,c_2$. Furthermore, we can bound the $\ell_{\infty,\infty}$ norm of $\frac{1}{M}\sum_{m=1}^M\hat\bSigma^m-\bSigma$ in the same way. In specific, with probability at least $1-e^{-\frac{c_2n\delta^2}{\sigma^4}}$, we have 
	\#\label{equ:b6}
	\opnorm{\frac{1}{M}\sum_{m=1}^M\hat\bSigma^m-\bSigma}{\infty,\infty}\leq c_0\cdot c_1(\sqrt{\frac{\log (d)}{N}}+\frac{\delta}{\sqrt{M}}),
	\#
	for some constants $c_1,c_2$. Using the method of union bound for \eqref{equ:b5} for all $m\in[M]$ and \eqref{equ:b6} and the change of variable of $\delta$, we have 
	\#\label{equ:b7}
	\opnorm{\hat\bSigma^m-\bSigma}{\infty,\infty}\leq c_1\sqrt{\frac{\log(dM/\delta)}{n}},\quad \forall m,\notag\\
	\opnorm{\frac{1}{M}\sum_{m=1}^M\hat\bSigma^m-\bSigma}{\infty,\infty}\leq c_1\sqrt{\frac{\log(dM/\delta)}{N}},
	\#
	for some universal constant $c_1$, with probability at least $1-\delta$.
	
	Next, we derive an upper bound on the $\ell_{\infty,\infty}$ norm of the error $\hat\bU^m-\bU$ for a fixed $m$. By Assumption \ref{assumption:1}, we can rewrite $\bU_{\cdot k}$ in the following form 
	\$
	\bU_{\cdot k}=(\bmu_1,\ldots,\bmu_K)\bh^k,\quad \forall k\in\cbr{1,\ldots,K-1},
	\$
	where the vector $\bh^k\in\RR^{K}$ satisfies
	\$
	\bh^k_k=1-\frac{1}{K},\bh^k_j=-\frac{1}{K}, \forall j\neq k.
	\$
	Using $\bh^k$, we can express elements of $\hat\bU^m-\bU$ as
	\$
	[\hat\bU^m-\bU]_{jk}=\hat\bU_{jk}^m-\bU_{jk}=\tilde\bX^m_{j\cdot}\bh^k-\bU_{jk},
	\$
	where $\tilde\bX^m=(\bar \bx^{m1},\ldots,\bar\bx^{mK})\in\RR^{d\times K}$ is the standard unbiased estimator of $(\bmu_1,\ldots,\bmu_K)$. Note that the $(j,k)$th element of $\tilde\bX^m$ is a linear combination of coordinates of $\bx_i^m$ and thus $\tilde\bX_{jk}^m$ is sub-Gaussian with parameter
	\$
	\sigma^2_{jk}=\frac{\sigma^2\bSigma_{jj}}{b}\leq\tilde\sigma^2\coloneqq\frac{\sigma^2c_0}{b},
	\$
	where the inequality follows from the condition $\sup_{j}\bSigma_{jj}\leq c_0$. Furthermore, columns of $\tilde\bX^m$ are independent random vectors. Therefore, $[\hat\bU^m-\bU]_{jk}$ is zero-mean sub-Gaussian variable with parameter
	\$
	\norm{\bh^k}_2\cdot\tilde\sigma=\frac{\sqrt{K-1}}{K}\tilde\sigma\leq \tilde\sigma.
	\$
	Then by the maximal inequality, we have
	\#\label{equ:b8}
	\opnorm{\hat\bU^m-\bU}{\infty,\infty}=\sup_{1\leq j\leq d,1\leq k\leq K-1}\abr{\hat\bU_{jk}^m-\bU_{jk}}\leq \delta,
	\#
	with probability at least $1-2dKe^{-\frac{\delta^2b}{2c_0\sigma^2}}$. Applying the same argument to $\frac{1}{M}\sum_{m=1}^M\hat\bU^m-\bU$, we have
	\#\label{equ:b9}
	\opnorm{\frac{1}{M}\sum_{m=1}^M\hat\bU^m-\bU}{\infty,\infty}\leq \frac{\delta}{\sqrt{M}},
	\#
	with probability at least $1-2dKe^{-\frac{\delta^2b}{2c_0\sigma^2}}$. Using the method of union bound for \eqref{equ:b8} for all $m$ and \eqref{equ:b9} and the change of variable of $\delta$, we have with probability at least $1-\delta$,
	\#
	\opnorm{\hat\bU^m-\bU}{\infty,\infty} & \leq c_2 \sqrt{\frac{K\log(dMK/\delta)}{n}},\quad \forall m,\notag\\
	\opnorm{\frac{1}{M}\sum_{m=1}^M\hat\bU^m-\bU}{\infty,\infty}& \leq c_2\sqrt{\frac{K\log(dMK/\delta)}{N}},\label{equ:b10}
	\#
	for some universal constant $c_2$.
	
	Finally, we conclude our proof by using the method of union bound for \eqref{equ:b7} and \eqref{equ:b10}.
\end{proof}

\subsection{Proof of Lemma \ref{lma:a2}}

\begin{proof}[Proof of Lemma \ref{lma:a2}]\label{proof:a2}
	By definition of $\cL_m(\cdot)$ and $\bW^*=\bSigma^{-1}\bU$, the gradient of $\cL_m(\cdot)$ at the point $\bW^*$ is
	\$
	\nabla\cL_m(\bW^*)=\hat\bSigma^m\bW^*-\hat \bU^m=(\hat\bSigma^m-\bSigma)\bW^*+(\bU-\hat \bU^m).
	\$
	Then by the triangle inequality, we obtain the following inequalities,
	\$
	\opnorm{\nabla\cL_m(\bW^*)}{\infty,\infty} & \leq \opnorm{\hat\bSigma^m-\bSigma}{\infty,\infty}\opnorm{\bW^*}{1,1}+\opnorm{\hat \bU^m-\bU}{\infty,\infty},\\
	\opnorm{\frac{1}{M}\sum_{m=1}^M\nabla\cL_m(\bW^*)}{\infty,\infty} & \leq \opnorm{\frac{1}{M}\sum_{m=1}^M\hat\bSigma^m-\bSigma}{\infty,\infty}\opnorm{\bW^*}{1,1}+\opnorm{\frac{1}{M}\sum_{m=1}^M\hat \bU^m-\bU}{\infty,\infty}
	\$
	Substituting the upper bounds \eqref{conclusion:a1}, \eqref{conclusion:a2}, \eqref{conclusion:a3}, and \eqref{conclusion:a4} in Lemma \ref{lma:a1} into the previous two inequalities, we have for all $m\in[M]$,
	\$
	\opnorm{\nabla\cL_m(\bW^*)}{\infty,\infty}& \leq c_1\sqrt{\frac{\log(dM/\delta)}{n}}\opnorm{\bW^*}{1,1}+c_2\sqrt{\frac{K\log(dMK/\delta)}{n}},\\
	\opnorm{\frac{1}{M}\sum_{m=1}^M\nabla\cL_m(\bW^*)}{\infty,\infty} & \leq c_1\sqrt{\frac{\log(dM/\delta)}{N}}\opnorm{\bW^*}{1,1}+c_2\sqrt{\frac{K\log(dMK/\delta)}{N}},
	\$
	where the constants $c_1,c_2$ are defined in Lemma \ref{lma:a1}. 
\end{proof}

\subsection{Proof of Lemma \ref{lma:a3}}
\begin{proof}[Proof of Lemma \ref{lma:a3}]\label{proof:a3}
	We prove this lemma in two steps: first, we show that $\hat\bW_{t+1}-\bW^*\in\cH(s)$; second, we show that $\hat\bW_{t+1}-\bW^*\in\tilde \cC(\cS,3)$, where $\cS=\supp(\bW^*)$ and $s=|\cS|$. 
	
	\textit{Step 1: $\hat\bW_{t+1}-\bW^*\in\cH(s)$.} Before we give an proof, let us introduce some notations first. We use $\hat\bw^t_k$ and $\bw^*_k$ to denote the $k$th column of the matrix $\hat\bW_t$ and $\bW^*$, respectively. Then we use $\cS_k=\supp(\bw^*_k)$ to denote the support of the $k$th column of $\bW^*$. Note that the cardinality of $\cS_k$ is smaller than that of $\cS=\supp(\bW^*)$, which is $s$. 
	
	For each $k$ and $t$, we define the following transformed loss function
	\$
	\tilde\ell_{1k}(\bw,\hat\bw^t_k)=\ell_{1k}(\bw)+\inner{\frac{1}{M}\sum_{m=1}^M\nabla\ell_{mk}(\hat\bw^t_k)-\nabla\ell_{1k}(\hat\bw^t_k)}{\bw},
	\$
	where
	\$
	\ell_{mk}(\bw)=\frac{1}{2}\inner{\bw}{\hat\bSigma^m\bw}-\inner{\hat\bU^m_{\cdot k}}{\bw}.
	\$
	Since the loss function $\tilde\cL_1(\bW,\hat\bW_t)$ and the $\ell_{1,1}$ norm of $\bW$ can be decomposed in the form of 
	\$
	\tilde\cL_1(\bW,\hat\bW_t) & =\sum_{k=1}^{K-1}\tilde\ell_{1k}(\bw_k,\hat\bw^t_k),\\
	\opnorm{\bW}{1,1} & =\sum_{k=1}^{K-1}\norm{\bw_k}_1,
	\$
	where $\bw_k=\bW_{\cdot k}$ is the $k$th column of $\bW$, the optimization problem
	\$
	\hat\bW_{t+1}=\argmin_{\bW\in\RR^{d\times (K-1)}}\cbr{\tilde\cL_1(\bW,\hat\bW_t) + \lambda_{t+1}\opnorm{\bW}{1,1}}
	\$ 
	is equivalent to the following $K-1$ minimization problems
	\#
	\hat \bw^{t+1}_k=\argmin_{\bw_k\in\RR^d}\cbr{\tilde\ell_{1k}(\bw_k,\hat\bw_k^t)+\lambda_{t+1}\norm{\bw_k}_1},\quad \forall 1\leq k\leq K-1.\label{equ:optimization}
	\#
	Moreover, we have the following inequality
	\$
	\norm{\nabla\tilde\ell_{1k}(\bw^*_k,\hat\bw^t_k)}_{\infty}\leq \opnorm{\nabla\tilde \cL_1(\bW^*,\hat \bW_t)}{\infty,\infty}\leq \frac{\lambda_{t+1}}{2},
	\$
	since $\nabla\tilde\ell_{1k}(\bw^*_k,\hat\bw^t_k)$ is the $k$th column of $\nabla\tilde \cL_1(\bW^*,\hat \bW_t)$ and $\lambda_{t+1}$ is defined by \eqref{equ:lambda}.
	
	Now let us prove the lemma by anaylzing the $k$th column of $\hat\bW_{t+1}-\bW^*$ with $k$ and $t$ fixed. By the triangle inequality, we have
	\$
	\norm{\hat \bw^{t+1}_k}_1-\norm{ \bw^*_k}_1&=\norm{\bw^*_k+(\hat \bw^{t+1}_k-\bw^*_k)_{\cS_k^c}+(\hat \bw^{t+1}_k-\bw^*_k)_{\cS_k}}_1-\norm{\bw^*_k}_{1}\\
	&\geq\norm{\bw^*_k+(\hat \bw^{t+1}_k-\bw^*_k)_{\cS^c_k}}_{1}-\norm{(\hat \bw^{t+1}_k-\bw^*_k)_{\cS_k}}_{1}-\norm{\bw^*_k}_{1}\\
	&=\norm{(\hat \bw^{t+1}_k-\bw^*_k)_{\cS_k^c}}_{1}-\norm{(\hat \bw^{t+1}_k-\bw^*_k)_{\cS_k}}_{1},
	\$
	where $\cS_k=\supp(\bw^*_k)$. By the optimality \eqref{equ:optimization} of $\hat \bw^{t+1}_k$, we have
	\$
	\tilde \ell_{1k}(\hat \bw^{t+1}_k,\hat \bw^t_k)+\lambda_{t+1}\norm{\hat \bw^{t+1}_k}_{1}-\tilde \ell_{1k}(\bw^*_k,\hat \bw^t_k)-\lambda_{t+1}\norm{\bw^*_k}_{1}\leq 0.
	\$
	Combining the above two inequalities, we get
	\$
	\tilde \ell_{1k}(\hat \bw^{t+1}_k,\hat \bw^t_k)-\tilde \ell_{1k}(\bw^*_k,\hat \bw^t_k)+\lambda_{t+1}(\norm{(\hat \bw^{t+1}_k-\bw^*_k)_{\cS_k^c}}_{1}-\norm{(\hat \bw^{t+1}_k-\bw^*_k)_{\cS_k}}_{1})\leq0.
	\$
	By the convexity of $\tilde \ell_{1k}(\cdot,\hat \bw^t_k)$ we have
	\$
	\tilde \ell_{1k}(\hat \bw^{t+1}_k,\hat \bw^t_k)-\tilde \ell_{1k}(\bw^*_k,\hat \bw^t_k)\geq\inner{\nabla\tilde \ell_{1k}(\bw^*_k,\hat \bw_k^t)}{\hat \bw^{t+1}_k-\bw^*_k}.
	\$
	Therefore,
	\$
	0&\geq\inner{\nabla\tilde \ell_{1k}(\bw_k^*,\hat \bw_k^t)}{\hat \bw_k^{t+1}-\bw_k^*}+\lambda_{t+1}(\norm{(\hat \bw_k^{t+1}-\bw_k^*)_{\cS_k^c}}_{1}-\norm{(\hat \bw_k^{t+1}-\bw_k^*)_{\cS_k}}_{1})\\
	&\geq-\norm{\nabla\tilde \ell_{1k}(\bw^*_k,\hat \bw^t_k)}_{\infty}\norm{\hat \bw^{t+1}_k-\bw^*_k}_{1}+\lambda_{t+1}(\norm{(\hat \bw^{t+1}_k-\bw^*_k)_{\cS_k^c}}_{1}-\norm{(\hat \bw^{t+1}_k-\bw^*_k)_{\cS_k}}_{1}).
	\$
	Since $\frac{\lambda_{t+1}}{2}\geq\norm{\nabla\tilde \ell_{1k}(\bw^*_k,\hat \bw_k^t)}_{\infty}$, we have
	\$
	0&\geq-\frac{1}{2}\norm{\hat \bw_k^{t+1}-\bw_k^*}_{1}+\norm{(\hat \bw_k^{t+1}-\bw_k^*)_{\cS_k^c}}_{1}-\norm{(\hat \bw_k^{t+1}-\bw_k^*)_{\cS_k}}_{1}\\
	&=\frac{1}{2}\norm{(\hat \bw^{t+1}_k-\bw_k^*)_{\cS_k^c}}_{1}-\frac{3}{2}\norm{(\hat \bw_k^{t+1}-\bw^*_k)_{\cS_k}}_{1},
	\$
	which proves that $\hat\bW_{t+1}-\bW^*\in\cH(s)$ since $|\cS_k|\leq|\cS|=s$.

	\textit{Step 2: $\hat\bW_{t+1}-\bW^*\in\tilde \cC(\cS,3)$}. The proof of this step is analogous to that of the first step. By the triangle inequality, we have
	\$
	\opnorm{\hat \bW_{t+1}}{1,1}-\opnorm{ \bW^*}{1,1}&=\opnorm{\bW^*+(\hat \bW_{t+1}-\bW^*)_{\cS^c}+(\hat \bW_{t+1}-\bW^*)_{\cS}}{1,1}-\opnorm{\bW^*}{1,1}\\
	&\geq\opnorm{\bW^*+(\hat \bW_{t+1}-\bW^*)_{\cS^c}}{1,1}-\opnorm{(\hat \bW_{t+1}-\bW^*)_{\cS_k}}{1,1}-\opnorm{\bW^*}{1,1}\\
	&=\opnorm{(\hat \bW_{t+1}-\bW^*)_{\cS^c}}{1,1}-\opnorm{(\hat \bW_{t+1}-\bW^*)_{\cS}}{1,1},
	\$
	where $\cS=\supp(\bW^*)$. By the optimality of $\hat \bW_{t+1}$, we have
	\$
	\tilde \cL_1(\hat \bW_{t+1},\hat \bW_{t})+\lambda_{t+1}\opnorm{\hat \bW_{t+1}}{1,1}-\tilde  \cL_1(\bW^*,\hat \bW_{t})-\lambda_{t+1}\opnorm{\bW^*}{1,1}\leq 0.
	\$
	Combining the above two inequalities, we get
	\$
	\tilde  \cL_1(\hat \bW_{t+1},\hat \bW_{t})-\tilde \cL_1(\bW^*,\hat \bW_{t})+\lambda_{t+1}(\opnorm{(\hat \bW_{t+1}-\bW^*)_{\cS^c}}{1,1}-\opnorm{(\hat \bW_{t+1}-\bW^*)_{\cS}}{1,1})\leq0.
	\$
	By the convexity of $\tilde  \cL_1(\cdot,\hat \bW_{t})$ we have
	\$
	\tilde  \cL_1(\hat \bW_{t+1},\hat \bW_{t})-\tilde \cL_1(\bW^*,\hat \bW_{t})\geq\inner{\nabla\tilde \cL_1(\bW^*,\hat \bW_{t})}{\hat \bW_{t+1}-\bW^*}.
	\$
	Therefore,
	\$
	0&\geq\inner{\nabla\tilde \cL_1(\bW^*,\hat \bW_{t})}{\hat \bW_{t+1}-\bW^*}+\lambda_{t+1}(\opnorm{(\hat \bW_{t+1}-\bW^*)_{\cS^c}}{1,1}-\opnorm{(\hat \bW_{t+1}-\bW^*)_{\cS}}{1,1})\\
	&\geq-\opnorm{\nabla\tilde \cL_1(\bW^*,\hat \bW_{t})}{\infty,\infty}\opnorm{\hat \bW_{t+1}-\bW^*}{1,1}+\lambda_{t+1}(\opnorm{(\hat \bW_{t+1}-\bW^*)_{\cS^c}}{1,1}-\opnorm{(\hat \bW_{t+1}-\bW^*)_{\cS}}{1,1}).
	\$
	Since $\frac{\lambda_{t+1}}{2}\geq\opnorm{\nabla\tilde  \cL_1(\bW^*,\hat \bW_{t})}{\infty,\infty}$, we have
	\$
	0&\geq-\frac{1}{2}\opnorm{\hat \bW_{t+1}-\bW^*}{1,1}+\opnorm{(\hat \bW_{t+1}-\bW^*)_{\cS^c}}{1,1}-\opnorm{(\hat \bW_{t+1}-\bW^*)_{\cS}}{1,1}\\
	&=\frac{1}{2}\opnorm{(\hat \bW_{t+1}-\bW^*)_{\cS^c}}{1,1}-\frac{3}{2}\opnorm{(\hat \bW_{t+1}-\bW^*)_{\cS}}{1,1},
	\$
	which proves that $\hat\bW_{t+1}-\bW^*\in\tilde\cC(\cS,3)$.
\end{proof}
\subsection{Proof of Lemma \ref{lma:a4}}
\begin{proof}[Proof of Lemma \ref{lma:a4}]\label{proof:a4}
	By definition of $\cL_1(\cdot)$, we have for each $\bDelta\in\cH(s)$,
	\#\label{equ:c1}
	&\cL_1(\bW^*+\bDelta)-\cL_1(\bW^*)-\inner{\nabla\cL_1(\bW^*)}{\bDelta}=\frac{1}{2}\inner{\bDelta}{\hat\bSigma^1\bDelta}.\notag\\
	= &\frac{1}{2} \sum_{j=1}^{K-1}\bDelta_{\cdot j}^{\top}\bSigma\bDelta_{\cdot j}\cdot\cbr{1+\frac{\bDelta_{\cdot j}^{\top}(\hat\bSigma^1-\bSigma)\bDelta_{\cdot j}}{\bDelta_{\cdot j}^{\top}\bSigma\bDelta_{\cdot j}}},
	\#
	where $\bDelta_{\cdot j}$ is the $j$th column of $\bDelta$.
	In the remain part of proof, we first establish an upper bound on the following term
	\$
	\sup_{\bDelta\in\cH(s),1\leq j\leq K-1}\frac{\abr{\bDelta_{\cdot j}^{\top}(\hat\bSigma^1-\bSigma)\bDelta_{\cdot j}}}{\bDelta_{\cdot j}^{\top}\bSigma\bDelta_{\cdot j}},
	\$
	using Proposition \ref{proposition:a1} and the RE($\kappa_0,s$) condition of $\bSigma$, then we prove the strong restricted convexity of $\cL_1$ by the equation \eqref{equ:c1} and the RE($\kappa_0,s$) condition of $\bSigma$. 
	
	By definition of $\cH(s)$, we have
	\$
	\sup_{\bDelta\in\cH(s),1\leq j\leq K-1}\frac{\abr{\bDelta_{\cdot j}^{\top}(\hat\bSigma^1-\bSigma)\bDelta_{\cdot j}}}{\bDelta_{\cdot j}^{\top}\bSigma\bDelta_{\cdot j}}\leq\sup_{\bzeta\in\cC(\cA,3),|\cA|\leq s}\frac{\abr{\bzeta^{\top}(\hat\bSigma^1-\bSigma)\bzeta}}{\bzeta^{\top}\bSigma\bzeta}.
	\$
	Similar to the proof of Lemma \ref{lma:a1}, by definition of $\hat\bSigma^1$ and the triangle inequality, we have 
	\#\label{equ:triangle}
	\abr{\bzeta^{\top}(\hat\bSigma^1-\bSigma)\bzeta} \leq \frac{Kb}{Kb-K}\abr{\bzeta^{\top}\bA\bzeta}+\frac{K}{Kb-K}\abr{\bzeta^\top\bA'\bzeta},
	\#
	where $\bA$ and $\bA'$ are defined in \eqref{A} and \eqref{A'} for $m=1$. By Proposition \ref{proposition:a1}, for some constants $c_1$ and $c_2$, we have
	\#
	& \sup_{\bzeta\in\cC(\cA,3),|\cA|\leq s}\frac{\abr{\bzeta^\top\bA\bzeta}}{\bzeta^\top\bSigma\bzeta}=\sup_{\etab=\frac{\bSigma^{1/2}\bzeta}{\norm{\bSigma^{1/2}\bzeta}_2},\bzeta\in\cC(\cA,3),|\cA|\leq s}\abr{\etab^\top(\frac{1}{n}\sum_{i=1}^n\tilde\bz_i^1\tilde\bz_i^{1^\top}-\bI_d)\etab}\notag\\
	\leq & c_1\frac{\WW(\cY_2)}{\sqrt{n}}+\delta,\label{equ:c2}
	\#
	with probability at least $1-e^{-\frac{c_2n\delta^2}{\sigma^4}}$, where
	\$
	\cY_2=\cbr{\etab=\frac{\bSigma^{1/2}\bzeta}{\norm{\bSigma^{1/2}\bzeta}_2}:\bzeta\in\cC(\cA,3),|\cA|\leq s}\subset\SSS^{d-1}.
	\$
	Now let us upper bound the Gaussian width $\WW(\cY_2)$. The Gaussian width $\WW(\cY_2)$ is
	\$
	\WW(\cY_2)=\EE[\sup_{\etab\in\cY_2}|\inner{\bg}{\etab}|]=\EE[\sup_{\bzeta\in\cC(\cA,3,),|\cA|\leq s,\norm{\bSigma^{1/2}\bzeta}_2=1}|\inner{\bg}{\bSigma^{1/2}\bzeta}|].
	\$ 
	Then by H$\rm\ddot{o}$lder's inequality, we have
	\$
	\WW(\cY_2) & \leq \EE[\sup_{\bzeta\in\cC(\cA,3),|\cA|\leq s,\norm{\bSigma^{1/2}\bzeta}_2=1}\norm{\bSigma^{1/2}\bg}_\infty\norm{\bzeta}_1]\\
	& \overset{\rm{(i)}}{\leq} \EE[\sup_{\bzeta\in\cC(\cA,3),|\cA|\leq s,\norm{\bSigma^{1/2}\bzeta}_2=1}\norm{\bSigma^{1/2}\bg}_\infty\norm{\bzeta}_2]\cdot 4\sqrt{s}\\
	& \overset{\rm{(ii)}}{\leq}\EE[\norm{\bSigma^{1/2}\bg}_\infty]\cdot 4\sqrt{\frac{s}{\kappa_0}},
	\$
	where the inequality $\rm(i)$ follows from the property
	\$
	\norm{\bzeta}_1=\norm{\bzeta_\cA}_1+\norm{\bzeta_{\cA^c}}_1\leq 4\norm{\bzeta_\cA}_1\leq 4\sqrt{|\cA|}\norm{\bzeta_\cA}_2\leq 4\sqrt{s}\norm{\bzeta}_2,
	\$
	and the inequality $\rm(ii)$ holds due to the RE($\kappa_0,s$) condition of $\bSigma$ and $\norm{\bSigma^{1/2}\bzeta}_2=1$. Then using the maximal inequality and the assumption $\sup_j\bSigma_{jj}\leq c_0$, we can bound the Gaussian width $\WW(\cY_2)$ as follows
	\#\label{equ:c3}
	\WW(\cY_2)\leq \sqrt{\frac{32c_0s\log(2d)}{\kappa_0}}.
	\#
	
	Combining the upper bound \eqref{equ:c3} of the Gaussian width $\WW(\cY_2)$ with the bound \eqref{equ:c2}, we have with probability at least $1-e^{-\frac{c_2n\delta^2}{\sigma^4}}$, 
	\$
	\sup_{\bzeta\in\cC(\cA,3),|\cA|\leq s}\frac{\abr{\bzeta^\top\bA\bzeta}}{\bzeta^\top\bSigma\bzeta}\leq c_1\sqrt{\frac{s\log(d)}{n}}+\delta,
	\$
	for some constants $c_1,c_2$. Apply the same argument to $\bA'$ and we get with probability at least $1-e^{-\frac{c_2Kb\delta^2}{\sigma^4}}$,
	\$
	\sup_{\bzeta\in\cC(\cA,3),|\cA|\leq s}\frac{\abr{\bzeta^\top\bA'\bzeta}}{\bzeta^\top\bSigma\bzeta}\leq c_1\sqrt{\frac{s\log(d)}{K}}+\sqrt{b}\delta,
	\$
	for some contants $c_1$ and $c_2$. Combining these two upper bounds with the inequality \eqref{equ:triangle}, we have with probability at least $1-2e^{-\frac{c_2n\delta^2}{\sigma^4}}$,
	\$
	\sup_{\bzeta\in\cC(\cA,3),|\cA|\leq s}\frac{\abr{\bzeta^{\top}(\hat\bSigma^1-\bSigma)\bzeta}}{\bzeta^\top\bSigma\bzeta}\leq c_1\sqrt{\frac{s\log(d)}{n}}+\delta,
	\$
	for some contants $c_1$ and $c_2$. Note again that $\bA'$-term is negligible relative to $\bA$-term. By change of variable of $\delta$, we have with probability at least $1-\delta$,
	\$
	\sup_{\bzeta\in\cC(\cA,3),|\cA|\leq s}\frac{\abr{\bzeta^{\top}(\hat\bSigma^1-\bSigma)\bzeta}}{\bzeta^\top\bSigma\bzeta}\leq c_1\sqrt{\frac{s\log(d/\delta)}{n}}.
	\$
	Then there exists some constant $c_3$, if $n\geq c_3s\log(d/\delta)$, 
	\$
	&\abr{1+\frac{\bDelta_{\cdot j}^{\top}(\hat\bSigma^1-\bSigma)\bDelta_{\cdot j}}{\bDelta_{\cdot j}^{\top}\bSigma\bDelta_{\cdot j}}}\notag\\
	\geq & 1-\sup_{\bzeta\in\cC(\cA,3),|\cA|\leq s}\frac{\abr{\bzeta^{\top}(\hat\bSigma^1-\bSigma)\bzeta}}{\bzeta^\top\bSigma\bzeta}\notag\\
	\geq & 1/2,
	\$
	with probability at least $1-\delta$. 
	
	By the RE($\kappa_0,s$) condition of $\bSigma$ and the property that $\bDelta_{\cdot j}\in\cC(\cA,3)$ for some $|\cA|\leq s$, we have
	\$
	\bDelta_{\cdot j}^{\top}\bSigma\bDelta_{\cdot j}\geq \kappa_0\norm{\bDelta_{\cdot j}}_2^2.
	\$
	Thus we have with probability at least $1-\delta$,
	\$
	&\cL_1(\bW^*+\bDelta)-\cL_1(\bW^*)-\inner{\nabla\cL_1(\bW^*)}{\bDelta}\geq \frac{1}{4}\sum_{j=1}^{K-1}\bDelta_{\cdot j}^{\top}\bSigma\bDelta_{\cdot j}\notag\\
	\geq & \frac{\kappa_0}{4}\sum_{j=1}^{K-1}\norm{\bDelta_{\cdot j}}_2^2=\frac{\kappa_0}{4}\opnorm{\bDelta}{2,2}^2,
	\$
	if $n\geq c_3s\log(d/\delta)$ for some constant $c_3$.
\end{proof}

\end{document}